\begin{document}

\noindent                                             
\begin{picture}(150,36)                               
\put(5,20){\tiny{Accepted by}}                       
\put(5,7){\textbf{Topology Proceedings}}              
\put(0,0){\framebox(140,34){}}                        
\put(2,2){\framebox(136,30){}}                        
\end{picture}                                         
\vspace{0.5in}

\renewcommand{\bf}{\bfseries}
\renewcommand{\sc}{\scshape}
\vspace{0.5in}
\swapnumbers
\newtheorem{thm}{Theorem}[section]
\newtheorem*{3.4B}{\ref{3.4}B.~Theorem}
\newtheorem*{3.5A}{\ref{3.5}A.~Theorem}
\newtheorem*{3.5B}{\ref{3.5}B.~Corollary}
\newtheorem*{3.5C}{\ref{3.5}C.~Corollary}
\newtheorem{cor}[thm]{Corollary}
\newtheorem{lem}[thm]{Lemma}
\newtheorem{prop}[thm]{Proposition}

\newtheorem*{5.1}{\ref{5}.1.~Proposition}
\newtheorem*{5.2}{\ref{5}.2.~Proposition}
\newtheorem*{5.3A}{\ref{5.3}A.~Proposition}
\newtheorem*{5.3B}{\ref{5.3}B.~Proposition}

\theoremstyle{definition}
\newtheorem{se}[thm]{}
\newtheorem{exa}[thm]{Example}
\newtheorem{que}[thm]{Question}
\newtheorem{definition}[thm]{Definition}
\newtheorem*{example}{Example}
\newtheorem*{note*}{Note}
\newtheorem*{3.4A}{\ref{3.4}A.~Example}
\newtheorem{rem}[thm]{Remark}
\numberwithin{equation}{section}
\setcounter{section}{-1}

\title[\textsl{sncc}-Inheritance of pointwise a.p. in flows]%
{On \textsl{sncc}-inheritance of pointwise almost periodicity in flows}

\author[X.-P.~Dai]{Xiongping Dai}
\address{Department of Mathematics; Nanjing University;
Nanjing 210093; Peoples Republic of China}
\email{xpdai@nju.edu.cn}
\thanks{The author was supported in part by NSFC Grant \#12271245.}


\subjclass[2020]{Primary 37B05; Secondary 54H15}
%

\keywords{Flow, almost periodic point, inheritance}

\begin{abstract}
Let $H$ be a subnormal co-compact closed subgroup of a Hausdorff topological group $T$ and $X$ a compact Hausdorff space.
We prove the inheritance theorem: A point of $X$ is almost periodic (a.p.) for $T\curvearrowright X$ iff it is a.p. for $H\curvearrowright X$.
Moreover, if $T\curvearrowright X$ is minimal with $H\lhd T$, then $\mathscr{O}_H\colon X\rightarrow2^X$, ${x\mapsto\overline{Hx}}$ is a continuous mapping, and, $T\curvearrowright X/H$ is an a.p. nontrivial factor of $T\curvearrowright X$ iff $T\curvearrowright X\times T/H$ is not minimal.
\end{abstract}
\maketitle

\section{\bf Introduction}
Let $X$ be a compact Hausdorff space as our phase space and $T$ a Hausdorff topological group as our phase group, unless otherwise specified.
We say that $(T,X)$ is a \textit{flow}, denoted $\mathscr{X}$ or $T\curvearrowright X$,  if there exists a continuous map $T\times X\xrightarrow{(t,x)\mapsto tx}X$, called the phase mapping, such that $ex=x$ and $(st)x=s(tx)$ for all $x\in X$ and $s,t\in T$, where $e\in T$ is the identity element.
If $\mathscr{X}$ has some dynamical properties and $H$ is a closed subgroup of $T$, it is natural to ask whether or not the induced subflow $H\curvearrowright X$ necessarily inherits these dynamical properties.

We shall consider the pointwise almost periodic inheritance of subnormal co-compact closed subgroup action (see Thm.~\ref{3.1} and its applications Thm.~\ref{3.4}B, Thm.~\ref{6.5}, and Thm.~\ref{6.8}). In literature \ref{3.4}B is a classical theorem that gives us the first example of minimal distal non-equicontinuous metric flows. Its proofs available in the literature (e.g., \cite[Chap.~IV]{AGH} and \cite[Exa.~6.19.2]{E69}) are cumbersome and lengthy. Ours is however very concise using the inheritance theorem.

\section{\bf Basic notions and lemmas}\label{s1}
Let $\mathscr{X}$ be a flow.
By $2^T$ and $2^X$ we denote respectively the collections of nonempty compact subsets of $T$ and $X$, where $2^X$ is endowed with the Vietoris topology. By $\mathfrak{N}_e(T)$ and $\mathfrak{N}_x(X)$ we denote the systems of neighborhoods of $e$ in $T$ and of $x$ in $X$, respectively. Here $\{pt\}$ stands for a singleton.

\begin{se}\label{1.1}
Let $H$ be any subgroup of $T$. Then the quotient map $\rho\colon T\rightarrow T/H$ is surjective, where $T/H=\{tH\,|\,t\in T\}$ is the left-coset space. By
\begin{enumerate}
\item[] $\mathscr{O}_H\colon X\rightarrow 2^X$, $x\mapsto\overline{Hx}$
\end{enumerate}
it stands for the lower semi-continuous $H$-orbit-closure mapping.
\end{se}

\begin{se}\label{1.2}
A set $A$ in $T$ is \textit{syndetic}~\cite[Def.~2.02]{GH} if there exists $K\in2^T$ such that $T=K^{-1}A$, or equivalently, $Kt\cap A\not=\emptyset\ \forall t\in T$.
\end{se}

\begin{se}\label{1.3}
A point $x$ in $X$ is called \textit{almost periodic} (a.p.) for $\mathscr{X}$ if for every $U\in\mathfrak{N}_x(X)$
\begin{enumerate}
\item[] $N_T(x,U)=\{t\,|\,t\in T, tx\in U\}$
\end{enumerate}
is syndetic in $T$. If every point of $X$ is a.p., then $\mathscr{X}$ is said to be \textit{pointwise a.p.}
See \cite{NS, GH, G76, B79, A, De}.

Let $\mathscr{U}$ be the uniformity on $X$. If, for each $\alpha\in\mathscr{U}$, there exists a syndetic set $A$ in $T$ such that $Ax\subseteq\alpha[x]$ for all $x\in X$, then $\mathscr{X}$ is called an {\it a.p. flow}. Note that $\mathscr{X}$ is a.p. iff it is \textit{equicontinuous}; i.e., given $\varepsilon\in\mathscr{U}$ there exists $\delta\in\mathscr{U}$ such that $T\delta\subseteq\varepsilon$\,(cf.~\cite{GH, E69, B79, A, De}).
\end{se}

\begin{se}\label{1.4}
$\mathscr{X}$ is \textit{minimal} iff $\overline{Tx}=X$ for all $x\in X$. In fact, a point $x$ is a.p. for $\mathscr{X}$ iff $\overline{Tx}$ is a minimal set of $\mathscr{X}$ (see, e.g., \cite{NS, G46, GH}).
\end{se}

\begin{se}\label{1.5}
By $\mathcal {S}^{\textsl{ns}}(T)$ we denote the family of the normal syndetic closed subgroups of $T$.
\end{se}

\begin{se}\label{1.6}
A subgroup $H$ of $T$ is said to be \textit{co-compact} iff the quotient space $T/H$ is compact. In that case, $T$ is also called a compact extension of $H$.
We denote by $\mathcal{S}^{\textsl{ncc}}(T)$ the family of the normal co-compact closed subgroups of $T$.
\end{se}

\begin{lem}\label{1.7}
Let $T$ be an ``\textsl{w\!F}-group''\,(i.e., $T$ is a group with a topology under which $(s,t)\mapsto st^{-1}$ is separately continuous).\footnote{A group $K$ that is compact with multiplication separately continuous and inversion continuous is refereed to as an ``\textsl{F}-group'' in \cite[Rem.~2.5]{V77}. An locally compact\,(LC) Hausdorff \textsl{w\!F}-group is a topological group according to Ellis' Joint Continuity Theorem.} Let $H_1<H_2$ be closed subgroups of $T$; then:
\begin{enumerate}[(a)]
\item[$(\mathrm{a})$] If $H_1$ is syndetic in $T$, then it is syndetic in $H_2$ and $H_2$ is also syndetic in $T$\,(cf., e.g.,~\cite{GH, B79}).
\item[$(\mathrm{b})$] If $H_1$ is co-compact in $T$, then it is co-compact in $H_2$ and $H_2$ is also co-compact in $T$.
\item[$(\mathrm{c})$] If $T$ is a semi-topological group and $H_1$ is syndetic in $H_2$ and $H_2$ is syndetic in $T$, then $H_1$ is syndetic in $T$\,(cf., e.g.,~\cite{GH, B79}).
\end{enumerate}
\end{lem}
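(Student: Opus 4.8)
The plan is to argue directly from the two defining criteria: a subgroup is syndetic in its ambient group exactly when that group equals $K^{-1}$ times the subgroup for some $K\in2^{T}$, and co-compact exactly when the quotient space is compact. For part (a), suppose $H_1$ is syndetic in $T$, so $T=K^{-1}H_1$ for some $K\in2^{T}$, and after replacing $K$ by $K\cup\{e\}$ I may assume $e\in K$. Since $H_1\subseteq H_2$ we get $T=K^{-1}H_1\subseteq K^{-1}H_2\subseteq T$, so $K$ itself witnesses that $H_2$ is syndetic in $T$. For syndeticity of $H_1$ inside $H_2$ I would intersect the witness with $H_2$: given $h\in H_2$, write $h=k^{-1}a$ with $k\in K$, $a\in H_1$; then $k=ah^{-1}\in H_2$, so in fact $k\in K\cap H_2$. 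Hence $H_2=(K\cap H_2)^{-1}H_1$, where $K\cap H_2$ is a nonempty compact subset of $H_2$ (a closed subset of the compact set $K$, since $H_2$ is closed, and it contains $e$). This settles (a).

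For part (b) I would pass to quotient spaces. The canonical map $T/H_1\to T/H_2$, $tH_1\mapsto tH_2$, is well defined because $H_1\subseteq H_2$; it is continuous as the factorization of the continuous quotient $\rho_2\colon T\to T/H_2$ through the quotient map $\rho_1\colon T\to T/H_1$, and surjective since $\rho_2$ is. Thus $T/H_2$, a continuous image of the compact space $T/H_1$, is compact, i.e.\ $H_2$ is co-compact in $T$. For co-compactness of $H_1$ in $H_2$ I would identify $H_2/H_1$ with the subset $\rho_1(H_2)\subseteq T/H_1$. Because $\rho_1^{-1}(\rho_1(H_2))=H_2H_1=H_2$ is closed and $\rho_1$ is a quotient map, $\rho_1(H_2)$ is closed in $T/H_1$, hence compact; and since $H_2$ is a saturated closed set, the restriction $\rho_1|_{H_2}\colon H_2\to\rho_1(H_2)$ is again a quotient map whose fibres are exactly the $H_1$-cosets in $H_2$, so $\rho_1(H_2)$ carries the quotient topology and is homeomorphic to $H_2/H_1$. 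Therefore $H_2/H_1$ is compact.

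Part (c) is where I expect the real work. Chaining the two covers is formal: from $H_2$ syndetic in $T$ choose compact $K_1$ with $T=K_1^{-1}H_2$, and from $H_1$ syndetic in $H_2$ choose compact $K_2\subseteq H_2$ with $H_2=K_2^{-1}H_1$; then $T=K_1^{-1}K_2^{-1}H_1=(K_2K_1)^{-1}H_1$. Consequently $H_1$ is syndetic in $T$ as soon as the product set $K_2K_1$ lies in some compact set $K$, for then $Kt\cap H_1\supseteq(K_2K_1)t\cap H_1\neq\emptyset$ for every $t\in T$. Establishing this compactness is the main obstacle. If $T$ were a topological group it would be immediate, $K_2K_1$ being the image of the compact set $K_2\times K_1$ under the jointly continuous multiplication. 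Under the standing hypothesis that $T$ is only semi-topological, multiplication is merely separately continuous: each slice $K_2b$ $(b\in K_1)$ and $aK_1$ $(a\in K_2)$ is still compact, since single translations are homeomorphisms with inverse the translation by the inverse element, but the tube-lemma step needed to make the union over the compact index set compact fails without joint continuity. I would resolve this by supplying joint continuity on the relevant compact sets where it is available — via Ellis' Joint Continuity Theorem under local compactness, as recalled in the footnote — or by the argument indicated in the cited sources; granting the compactness of $K_2K_1$, the inclusion above completes the proof.
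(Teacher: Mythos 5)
Your parts (a) and (b) are correct and take essentially the paper's route, only written out in more detail: in (a), your witness $K\cap H_2$ --- a closed subset of the compact set $K$ --- is exactly what the paper's one-line remark ``a closed subset of a compact set is compact'' alludes to, and in (b) your factorization $T/H_1\to T/H_2$ together with the identification of $H_2/H_1$ with the saturated, closed, hence compact subset $\rho_1(H_2)$ of $T/H_1$ is precisely the paper's commutative-diagram argument (your extra care that the restriction of a quotient map to a saturated closed set is again a quotient map is a welcome detail the paper suppresses).

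Part (c) is where your proposal has a genuine gap, and you have in effect named it yourself: the whole argument reduces to the compactness of $K_2K_1$, and you end by \emph{granting} it. Neither of your proposed escape routes is available under the stated hypotheses: Ellis' Joint Continuity Theorem requires $T$ to be locally compact Hausdorff, which is nowhere assumed in the lemma, and the cited sources prove the chaining statement for topological groups. What you are missing is the role of the extra hypothesis ``$T$ is a semi-topological group'' in (c). A \textsl{w\!F}-group already has \emph{separately} continuous multiplication: fixing $t$ in $(s,t)\mapsto st^{-1}$ shows every right translation is continuous, taking $s=e$ shows inversion is continuous, and the left translation $t\mapsto st=s\bigl((t^{-1})^{-1}\bigr)$ is a composition of these. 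So if ``semi-topological'' meant only separate continuity, the added hypothesis in (c) would be vacuous; it is there precisely to supply \emph{joint} continuity of multiplication (compare Section 2 of the paper, where ``para-topological'' is reserved for the Hausdorff jointly-continuous case). Under that reading, your own first observation closes the argument with no appeal to Ellis or to local compactness: $K_2K_1$ is the image of the compact set $K_2\times K_1$ under the continuous multiplication map, hence compact, and $T=K_1^{-1}K_2^{-1}H_1=(K_2K_1)^{-1}H_1$ exhibits $H_1$ as syndetic in $T$. With that one sentence in place of the conditional ending, your (c) is complete and matches what the paper's ``obvious'' must mean; as written, however, it assumes its crux.
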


\begin{proof}
\item (a): Obvious for a closed subset of a compact set is compact.
\item (b): This follows from the following canonically determined CD of continuous mappings:
\begin{diagram}
&&T&&\\
&\ldTo~{\rho_2^{}}&&\rdTo~{\rho_1^{}}&\\
T/H_2&&\lTo&&T/H_1&\hookleftarrow&H_2/H_1
\end{diagram}
and $H_2/H_1$ is closed in $T/H_1$.
\item (c): Obvious. The proof is completed.
\end{proof}

Note that if $H$ is a syndetic closed subgroup of $T$, then there exists $K\in2^T$ which is mapped onto $T/H$ by $\rho\colon T\rightarrow T/H$, and so $H$ is co-compact. When $T$ is LC, then the converse is true\,(cf., e.g.,~\cite[2.01]{GH} or \cite[2.8.18]{B79}).
In fact, this easily follows from the openness of $\rho$ as follows:

\begin{lem}\label{1.8}
Let $H$ be a subgroup of a topological group $T$, where $T$ need not be Hausdorff. Then:
\begin{enumerate}[$(\mathrm{a})$]
\item[$(\mathrm{a})$] The quotient map $\rho\colon T\rightarrow T/H$ is an open surjective map\,(cf.~\cite[Thm.~5.17]{HR}).
\item[$(\mathrm{b})$] If $H$ is closed, then $T/H$ is Hausdorff. (Thus, if $s,t\in T$ cannot be separated by open sets, then $sH=tH$ in $T/H$. See \cite[Thm.~5.21]{HR}.)\footnote{Recall that a topological space $Y$ is Hausdorff iff the diagonal
$\Delta_Y=\{(y,y)\,|\,y\in Y\}$
is closed in $Y\times Y$.}
\item[$(\mathrm{c})$] If $\{t_iH\colon i\in I\}$ converges in $T/H$ to $tH$, then there exists a subnet $\xi\colon I^\prime\to I$ and a net $\{k_{i^\prime}\colon i^\prime\in I^\prime\}$ in $T$ which converges in $T$ to $t$ with $k_{i^\prime}H=t_{\xi(i^\prime)}H$ for all $i^\prime\in I^\prime$. In particular, there exists a net $\{h_{i^\prime}\colon i^\prime\in I^\prime\}$ in $H$ such that $t_{\xi(i^\prime)}=k_{i^\prime}h_{i^\prime}$ for all $i^\prime\in I^\prime$.
\end{enumerate}
\end{lem}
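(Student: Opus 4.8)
The plan is to prove the three parts in order, deriving (b) and (c) from the openness established in (a). For part (a), surjectivity of $\rho$ is immediate from the definition of $T/H$ as the set of left cosets. For openness I would invoke the quotient topology: a set $\mathcal{W}\subseteq T/H$ is open iff $\rho^{-1}(\mathcal{W})$ is open in $T$. Given an open $U\subseteq T$, its saturation $\rho^{-1}(\rho(U))$ equals $UH=\bigcup_{h\in H}Uh$. Because $T$ is a topological group, each right translation $x\mapsto xh$ is a homeomorphism, so every $Uh$ is open and hence so is the union $UH$; thus $\rho(U)$ is open. Note this uses only joint continuity of the group operations and makes no appeal to the Hausdorff hypothesis.

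For part (b), I would show that the diagonal $\Delta_{T/H}$ is closed in $T/H\times T/H$, which by the footnote criterion is equivalent to Hausdorffness. The map $\mu\colon T\times T\to T$, $(s,t)\mapsto s^{-1}t$, is continuous, and $\mu^{-1}(H)=\{(s,t)\colon sH=tH\}=(\rho\times\rho)^{-1}(\Delta_{T/H})$. Since $H$ is closed, this set $S$ is closed in $T\times T$; it is moreover saturated for $\rho\times\rho$. Now $\rho\times\rho$ is open (a product of open maps is open) and surjective, so it carries the open saturated set $(T\times T)\setminus S$ onto the open set $(T/H\times T/H)\setminus\Delta_{T/H}$. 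Hence $\Delta_{T/H}$ is closed and $T/H$ is Hausdorff.

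Part (c) is the technical heart and the main obstacle, since $T$ need not be first countable, so genuine nets (not sequences) and a carefully constructed subnet are required. Given $t_iH\to tH$, I would index the subnet by the set $I'=\{(i,V)\in I\times\mathfrak{N}_t(T)\colon t_iH\in\rho(V)\}$, ordered by $(i_1,V_1)\le(i_2,V_2)$ iff $i_1\le i_2$ and $V_1\supseteq V_2$. The openness from (a) makes each $\rho(V)$ a neighborhood of $tH$, so the convergence $t_iH\to tH$ guarantees both that $I'$ is directed and that $\xi\colon(i,V)\mapsto i$ is monotone and cofinal, i.e. a legitimate subnet map. For each $(i,V)\in I'$ I would then choose, using openness once more, a preimage $k_{(i,V)}\in V$ with $k_{(i,V)}H=t_iH=t_{\xi(i,V)}H$; the membership $k_{(i,V)}\in V$ forces $k_{(i,V)}\to t$.

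Finally, setting $h_{i'}=k_{i'}^{-1}t_{\xi(i')}$ yields an element of $H$ with $t_{\xi(i')}=k_{i'}h_{i'}$, completing the lift. The delicate points are verifying the directedness of $I'$ (formed by intersecting two given neighborhoods and then invoking the directedness of $I$ together with the convergence hypothesis) and the convergence $k_{i'}\to t$, both of which hinge squarely on the openness of $\rho$ proved in (a); once openness is in hand these reduce to routine net bookkeeping.
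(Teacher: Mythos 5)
Your proposal is correct and follows essentially the same route as the paper's proof: (a) via the open saturation $UH=\rho^{-1}\rho[U]$, (b) via closedness of the saturated set $(\rho\times\rho)^{-1}[\Delta_{T/H}]=\{(s,t)\,|\,s^{-1}t\in H\}$ together with openness of $\rho\times\rho$, and (c) via a subnet indexed by neighborhoods of $t$ with coset representatives lifted into those neighborhoods using openness of $\rho$. The only difference is cosmetic: you index the subnet by the pairs $(i,V)$ already satisfying $t_iH\in\rho(V)$ and let $\xi$ be the projection, while the paper uses the full product $I\times\mathfrak{N}_t(T)$ and a choice function $\xi(i,U)>i$; both are standard, equivalent subnet constructions.
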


\begin{proof}
\item (a): If $U$ is open in $T$, then $UH=\bigcup_{s\in H}Us$ is open in $T$ such that $UH=\rho^{-1}\rho[U]$. Since $\rho$ is the quotient map, $\rho[U]$ is open in $T/H$.
\item (b): Since $\rho$ is an open map, $\rho\times\rho\colon T\times T\rightarrow T/H\times T/H$ is open and so is a quotient map. As
    \begin{enumerate}
    \item[] $(\rho\times\rho)^{-1}[\Delta_{T/H}]=\{(s,t)\in T\times T\,|\,s^{-1}t\in H\}$
    \end{enumerate}
    which is $\rho\times\rho$-saturated and which closed because $H$ is closed and $T$ is a topological group, it follows that $\Delta_{T/H}$ is closed.
\item (c): Let $I^\prime=I\times\mathfrak{N}_t(T)$, where $I^\prime$ is directed by $(i_1,U_1)\le(i_2,U_2)$ iff $i_1\le i_2$ and $U_1\supseteq U_2$ in $I^\prime$. For $i^\prime=(i,U)\in I^\prime$, by (a) we can choose $\xi(i^\prime)>i$ in $I$ such that $t_{\xi(i^\prime)}H\in\rho[U]$. Let $k_{i^\prime}\in U$ with $\rho k_{i^\prime}=\rho t_{\xi(i^\prime)}$. The proof is completed.
\end{proof}

Now if $T$ is LC and $H<T$ is co-compact, then for any compact $U\in\mathfrak{N}_e(T)$ we have that $\rho[U]\in\mathfrak{N}_{\rho(e)}(T/H)$ and there is a finite set $F\subset T$ with $T/H=\{suH\,|\,s\in F, u\in U\}$. Let $K=FU$. Then $K\in2^T$, $KH=T$ and $H$ is syndetic in $T$.
However, if $[T:H]$ is not finite and $T$ is not LC, the converse need not be true and so $\mathcal {S}^\textsl{ns}(T)\subsetneq\mathcal {S}^\textsl{ncc}(T)$ in general. For example, let $T$ be connected non-LC and $\chi\in\hat{T}$ (a continuous character of $T$) with $\chi\not\equiv 1$. Then $\ker\chi$ is a normal closed subgroup of $T$ such that $T/\ker\chi\cong\mathbb{S}$, where $\mathbb{S}$ is the unit circle in $\mathbb{C}$. So $T/\ker\chi$ is compact ($\ker\chi$ is co-compact in $T$) but we cannot guarantee $\ker\chi$ syndetic in $T$. If $\mathscr{X}$ is a minimal flow and $H=\ker\chi$, then it is of interest to know whether or not $H\curvearrowright X$ is a pointwise a.p. flow (see Thm.~\ref{3.1}).

\begin{se}\label{1.9}
A subgroup $H$ is called {\it subnormal} in $T$ (\cite[Def.~6.16]{E69}), if there exists a sequence of subgroups
$H_0=H<H_1<\dotsm<H_n=T$
such that $H_i$ is normal in $H_{i+1}$, denoted $H_i\lhd H_{i+1}$, for $0\le i<n$. By $\mathcal{S}^\textsl{sncc}(T)$ we denote the family of the subnormal co-compact closed subgroups of $T$.
\end{se}

\begin{se}\label{1.10}
As usual, $P(\mathscr{X})$ stands for the proximal relation of $\mathscr{X}$; that is, for every pair $(x,y)\in X\times X$, $(x,y)\in{P}(\mathscr{X})$ iff $\overline{T(x,y)}\cap \Delta_X\not=\emptyset$. If ${P}(\mathscr{X})=X\times X$, then $\mathscr{X}$ is called a \textit{proximal} flow. If $P(\mathscr{X})=\Delta_X$, then $\mathscr{X}$ is called a \textit{distal flow}. In this case, $\mathscr{X}$ is pointwise a.p.\,\cite{E69, G76, B79, A, De}.
\end{se}

\begin{exa}[{cf.~\cite[Ex.~6.19.2]{E69}}]\label{1.11}
Write $[x,y,z]$ for $\left[\begin{smallmatrix}1&x&y\\0&1&z\\0&0&1\end{smallmatrix}\right]$ and let
\begin{enumerate}
\item[] $G=\{[x,y,z]\,|\,x,y,z\in\mathbb{R}\}$,
\item[] $K=\{[x,y,z]\,|\,x,z\in\mathbb{Z}, y\in \mathbb{R}\}$,
\item[] $H=\{[x,y,z]\,|\,x,y,z\in\mathbb{Z}\}$.
\end{enumerate}
Then, under matrix multiplication $\cdot$, $H$ is a closed syndetic non-normal discrete subgroup of the simply connected nilpotent Lie group $G$ such that $H\lhd K\lhd G$. So $H\in\mathcal{S}^\textsl{sns}(G)$; but $H\notin\mathcal{S}^\textsl{ns}(G)$ for
\begin{enumerate}
\item[] $H\cdot[0,0,z]=\{[i,iz+j,z+k]\,|\,i,j,k\in\mathbb{Z}\}$ and
\item[] $[0,0,z]\cdot H=\{[i,j,z+k]\,|\,i,j,k\in\mathbb{Z}\}$
\end{enumerate}
such that $H\cdot[0,0,z]\not=[0,0,z]\cdot H$ for all $z\in\mathbb{R}\setminus\mathbb{Q}$.
The coset flow $G\curvearrowright G/H$ is of course minimal. Moreover, it is known that $G\curvearrowright G/H$ is a distal non-equicontinuous flow (see Thm.~\ref{3.4}B for a self-contained proof).
\end{exa}

\begin{lem}\label{1.12}
Let $\mathscr{X}$ be any flow and $H$ a co-compact closed subgroup of $T$, where $X$ need not be compact. If $X_0$ is an $H$-invariant closed subset of $X$, then $TX_0$ is a $T$-invariant closed subset of $X$ such that $t_iX_0\to tX_0$ whenever $t_iH\to tH$ in $T/H$.
\end{lem}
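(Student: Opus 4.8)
The plan is to treat the three assertions in turn, reducing each to the compactness of $T/H$ together with the lifting property of Lemma~\ref{1.8}(c). First I would record the routine observations. Since $X_0$ is $H$-invariant and $H$ is a group, for each $h\in H$ we have $hX_0\subseteq X_0$ and $h^{-1}X_0\subseteq X_0$, whence $hX_0=X_0$; consequently $thX_0=tX_0$, so the set $tX_0$ depends only on the coset $tH$ and the rule $tH\mapsto tX_0$ is well defined. The $T$-invariance of $TX_0$ is immediate from $s(TX_0)=(sT)X_0=TX_0$ for every $s\in T$.

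The heart of the matter is the closedness of $TX_0$, and this is where I expect the only real difficulty, precisely because neither $X$ nor $X_0$ is assumed compact, so the usual ``extract a convergent subnet from $X_0$'' argument is unavailable. The idea is to move the compactness from the phase space to $T/H$. Let $y$ lie in the closure of $TX_0$ and choose a net $t_ix_i\to y$ with $t_i\in T$ and $x_i\in X_0$. Since $H$ is co-compact, $T/H$ is compact, so the cosets $t_iH$ have a subnet converging to some $\tau H$; applying Lemma~\ref{1.8}(c) to this subnet I obtain (after reindexing) a net $k_\alpha\to\tau$ in $T$ and elements $h_\alpha\in H$ with $t_{i_\alpha}=k_\alpha h_\alpha$. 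Using $H$-invariance, $z_\alpha:=h_\alpha x_{i_\alpha}\in X_0$, and then $t_{i_\alpha}x_{i_\alpha}=k_\alpha z_\alpha$, i.e. $z_\alpha=k_\alpha^{-1}(t_{i_\alpha}x_{i_\alpha})$. Now continuity of inversion gives $k_\alpha^{-1}\to\tau^{-1}$, and joint continuity of the phase mapping gives $z_\alpha\to\tau^{-1}y$; since $X_0$ is closed, $\tau^{-1}y\in X_0$, so $y=\tau(\tau^{-1}y)\in\tau X_0\subseteq TX_0$. Thus $TX_0$ is closed.

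Finally, for the continuity of $tH\mapsto tX_0$ I would again route everything through Lemma~\ref{1.8}(c). By the net characterization of convergence it suffices to show that every subnet of $(t_iX_0)$ has a further subnet converging to $tX_0$; since the corresponding cosets still converge to $tH$, Lemma~\ref{1.8}(c) lets me pass to a subnet with a lift $k_\alpha\to t$ satisfying $k_\alpha X_0=t_{i_\alpha}X_0$, reducing the claim to: $k_\alpha\to t$ in $T$ forces $k_\alpha X_0\to tX_0$. The lower (Vietoris) half is automatic: if an open $U$ meets $tX_0$ in some $tx_0$, then $s\mapsto sx_0$ being continuous forces $k_\alpha x_0\in U$ eventually, so $k_\alpha X_0\cap U\neq\emptyset$ eventually. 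The upper half is the delicate point: given an open $U\supseteq tX_0$ one wants $k_\alpha X_0\subseteq U$ eventually, and here compactness of $X_0$ (as in the intended application with $X$ compact) lets one argue by contradiction via a convergent subnet $z_\alpha\to z\in X_0$ with $k_\alpha z_\alpha\notin U$ but $k_\alpha z_\alpha\to tz\in tX_0\subseteq U$. For general $X$ the same computation delivers the upper half when tested against compact sets, i.e. the convergence is that of closed sets; the lower half and the closedness proved above are insensitive to compactness.
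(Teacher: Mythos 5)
Your proposal is correct and follows essentially the same route as the paper's proof: compactness of $T/H$, the lifting Lemma~\ref{1.8}(c), $H$-invariance of $X_0$, and closedness of $X_0$, with continuity of inversion and of the phase map doing the rest. The only difference is expository: where you carefully separate Vietoris convergence (needing compact $X_0$) from closed-set convergence, the paper simply construes $t_iX_0\to tX_0$ as the one-sided condition that every limit $x$ of a net $t_iy_i$ with $y_i\in X_0$ lies in $tX_0$ --- which is exactly what your ``upper half tested against compact sets'' computation establishes.
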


\begin{proof}
(If $H$ is syndetic, then $T=KH$ for some $K\in2^T$, and so that $TX_0=KHX_0=KX_0$ is closed, for $K$ is compact.)
\item \textit{Step~1.} Assume  $t_i\in T$ is any net with $t_iH\to tH$ in $T/H$ for some $t\in T$. By going to a subnet (Lem.~\ref{1.8}c) if necessary, we can select a net $s_i$ in $H$ such that $t_is_i\to t$ in $T$ so $(t_is_i)^{-1}\to t^{-1}$. Let $t_i\in T$ and $y_i\in X_0$ such that $t_iy_i\to x\in X$. To show $t_iX_0\to tX_0$, we need only prove $x\in tX_0$. As $(t_is_i)s_i^{-1}y_i\to x$ and
$(t_is_i)^{-1}(t_is_i)(s_i^{-1}y_i)=s_i^{-1}y_i\to t^{-1}x$, it follows that $t^{-1}x\in X_0$ and $x\in tX_0$.

\item \textit{Step~2.} Assume $H$ is co-compact. Let $t_i\in T$ and $y_i\in X_0$ such that $t_iy_i\to x\in X$. Since $T/H$ is compact, we can assume $t_iH\to tH$ for some $t\in T$. Then by Step~1, $x\in tX_0$ and thus, $TX_0$ is closed.
The proof is completed.
\end{proof}

\begin{rem}\label{1.13}
Condition ``co-compact'' is crucial for Lemma~\ref{1.12}. Otherwise, we can easily point out counterexamples; for instance, if $H=\{e\}$ and $x\in X$, then $Hx=\{x\}$ is $H$-invariant closed but $Tx$ is generally not closed whenever $T$ is not compact.
\end{rem}

\section{\bf Motivations}\label{s2}
Now it is time to formulate our motivations.

\begin{thm}[{inheritance by \textsl{ns}-subgroup; \cite[Thm.~4.04, 4.19]{GH}, \cite{E69, B79, A}}]\label{2.1}
Let $\mathscr{X}$ be a flow and $H\in\mathcal {S}^\textsl{ns}(T)$, then:
\begin{enumerate}[(1)]
\item[$(1)$] $\mathscr{X}$ is pointwise a.p. iff $H\curvearrowright X$ is a pointwise a.p. subflow of $\mathscr{X}$.
\item[$(2)$] If $\mathscr{X}$ is minimal, then $\mathscr{O}_H\colon X\rightarrow2^X$ is continuous.
\end{enumerate}
\end{thm}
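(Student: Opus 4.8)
The plan is to handle both parts uniformly through the $H$-minimal sets, with Lemma~\ref{1.12} as the engine. The one structural fact I would isolate first is that normality of $H$ gives $t(Hz)=(tH)z=(Ht)z=H(tz)$ for every $t\in T$, so each homeomorphism $z\mapsto tz$ carries $H$-orbits to $H$-orbits; hence it sends $H$-invariant sets to $H$-invariant sets and $H$-minimal sets to $H$-minimal sets, and $t\,\overline{Hz}=\overline{H(tz)}$. This says precisely that $T$ permutes the $H$-minimal sets.

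For the easy implication in (1), suppose $H\curvearrowright X$ is pointwise a.p.\ and fix $x$ and $U\in\mathfrak{N}_x(X)$. Since $N_H(x,U)\subseteq N_T(x,U)$ and any superset of a syndetic set is syndetic (with the same $K$), it is enough to show $N_H(x,U)$ is syndetic in $T$. Writing $H=C^{-1}N_H(x,U)$ with $C\in2^H$ (syndeticity in $H$) and $T=K^{-1}H$ with $K\in2^T$ (as $H$ is syndetic in $T$), I get $T=(CK)^{-1}N_H(x,U)$, which is the required syndeticity; so $x$ is a.p.\ for $T$.

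For the substantive implication in (1), assume $\mathscr{X}$ is pointwise a.p.\ and fix $x$. Replacing $X$ by the $T$-minimal set $M=\overline{Tx}$ is legitimate, since neighbourhoods of $x$ in $X$ trace to neighbourhoods in $M$ while syndeticity is always measured in $H$; so I may assume $\mathscr{X}$ minimal. Now choose any $H$-minimal set $M_0\subseteq X$ (these exist by Zorn's lemma, as $X$ is compact Hausdorff and $H$ acts). Then $M_0$ is $H$-invariant and closed, and $H$ is co-compact (being syndetic, cf.\ the note after Lemma~\ref{1.7}), so Lemma~\ref{1.12} makes $TM_0$ a $T$-invariant closed set; minimality forces $TM_0=X$. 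Consequently every $z\in X$ lies in some $tM_0$, which is again $H$-minimal, whence $\overline{Hz}=tM_0$ is $H$-minimal and $z$ is a.p.\ for $H$ by \ref{1.4}. In particular $x$ is.

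For (2), I keep $\mathscr{X}$ minimal and retain $M_0$; the displayed computation shows $\{\overline{Hz}:z\in X\}=\{tM_0:t\in T\}$ and that these sets partition $X$. Since $tM_0$ depends only on the coset $tH$, the map $\phi\colon T/H\to2^X$, $tH\mapsto tM_0$, is well defined, and it is continuous by the final clause of Lemma~\ref{1.12}. As $T/H$ is compact, $\mathcal{C}:=\phi(T/H)=\{\overline{Hz}:z\in X\}$ is a compact subset of $2^X$, and $\mathscr{O}_H$ carries $X$ into $\mathcal{C}$, sending each $z$ to the unique member of $\mathcal{C}$ containing it. To conclude continuity, let $z_i\to z$; by compactness of $\mathcal{C}$, any subnet of $\mathscr{O}_H(z_i)$ has a further subnet converging to some $C\in\mathcal{C}$, and since $z_i\in\mathscr{O}_H(z_i)$ while the membership relation $\{(w,A):w\in A\}$ is closed in $X\times2^X$ (Vietoris), I get $z\in C$; being an $H$-minimal set containing $z$, $C=\overline{Hz}=\mathscr{O}_H(z)$. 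Every convergent subnet thus has the same limit, so $\mathscr{O}_H(z_i)\to\mathscr{O}_H(z)$. The main obstacle, and the single place where the hypotheses genuinely bite, is the closedness of $TM_0$ — the passage from an $H$-minimal set to a $T$-invariant closed set — which is exactly Lemma~\ref{1.12} and where co-compactness (equivalently here, syndeticity) of $H$ is indispensable; normality is what makes $T$ permute the $H$-minimal sets, so that both the partition and the map $\phi$ make sense.
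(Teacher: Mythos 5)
Your proof is correct, but it takes a genuinely different route from the paper's treatment. The paper never proves Theorem~\ref{2.1} directly---it is quoted from \cite{GH, E69, B79, A}---and its own version of the argument is the proof of the generalization Theorem~\ref{3.1}, of which Corollary~\ref{3.2} is the \textsl{ns}-specialization. That proof is pointwise and net-theoretic: for your ``substantive'' direction the paper picks, by Zorn, an $H$-a.p.\ point $y\in\overline{Hx}$, takes a net $t_ny\to x$, extracts $t_nH\to kH$ in the compact space $T/H$, lifts it by Lemma~\ref{1.8}(c) to $k_n\to k$ with $k_nH=t_nH$, concludes that $k^{-1}x\in\overline{Hy}$ is $H$-a.p., and only then invokes normality to transport almost periodicity from $k^{-1}x$ back to $x$; the converse direction is a similar net argument using co-compactness alone; and continuity of $\mathscr{O}_H$ (Theorem~\ref{3.1}(3)) is proved by contradiction from lower semicontinuity. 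Your argument is global instead: Lemma~\ref{1.12} (inside whose proof the same Lemma~\ref{1.8}(c) is hiding) yields $X=TM_0$, so $X$ is partitioned by the translates $tM_0$ of a single $H$-minimal set, parametrized continuously by $tH\in T/H$; pointwise almost periodicity and the $T$-invariant partition then come out simultaneously, and continuity of $\mathscr{O}_H$ follows from compactness of $T/H$, closedness of the membership relation in $X\times 2^X$, and disjointness of the partition. Each approach buys something: your substantive direction and your proof of (2) use only normality and co-compactness, never syndeticity, so your method in fact reproves the paper's Theorem~\ref{3.1}(1) for $H\in\mathcal{S}^\textsl{ncc}(T)$ and Theorem~\ref{3.1}(3), and your explicit parametrization of the partition by a quotient of $T/H$ anticipates Theorem~\ref{6.5}; syndeticity enters only in your easy direction (transitivity of syndeticity of return-time sets, the classical Gottschalk--Hedlund computation, essentially Lemma~\ref{1.7}(c)), where the paper's co-compact net argument would also have worked. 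The paper's pointwise formulation, on the other hand, is stated a.p.-point by a.p.-point and is set up to iterate along a subnormal chain $H_0\lhd H_1\lhd\dotsm\lhd H_n=T$, giving the \textsl{sncc} case that is the paper's actual novelty; your partition argument iterates as well (inside each $H_{i+1}$-minimal set, using Lemma~\ref{1.7}(b)), but that extra induction is not needed for the \textsl{ns} statement you were asked to prove.
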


In fact, if $T$ is only a para-topological group---namely, $T$ is Hausdorff and $(s,t)\mapsto st$ is continuous but $t\mapsto t^{-1}$ need not be continuous, this theorem is still true (cf.~\cite[Lem.~1.5e and Thm.~2.7]{D22}). For ``$H\in\mathcal {S}^\textsl{ncc}(T)$'' instead of condition ``$H\in\mathcal {S}^\textsl{ns}(T)$'' there is a useful partial generalization of Theorem~\ref{2.1} under an additional condition---proximal---as follows:

\begin{thm}[{\cite[Lem.~II.3.2]{G76}}]\label{2.2}
Let $\mathscr{X}$ be a minimal `proximal' flow. If $H\in\mathcal {S}^\textsl{ncc}(T)$, then $H\curvearrowright X$ is minimal proximal as a subflow of $\mathscr{X}$.
\end{thm}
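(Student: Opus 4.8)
The plan is to separate the two assertions—proximality and minimality of $H\curvearrowright X$—and to reduce the subnormal case to the normal one by induction on the length of a subnormal chain $H=H_0\lhd H_1\lhd\dotsm\lhd H_n=T$, using Lemma~\ref{1.7}(b) repeatedly: since $H$ is co-compact in $T$, every $H_i$ is co-compact in $H_{i+1}$ and in $T$. I would dispose of proximality first, for an \emph{arbitrary} co-compact closed subgroup $H$, since normality plays no role there. Fix $x,y\in X$; by~\ref{1.10} there is a net $t_i$ with $(t_ix,t_iy)\to(z,z)$. As $T/H$ is compact, after passing to a subnet $t_iH\to sH$, and by Lemma~\ref{1.8}(c) a further subnet gives $t_i=k_ih_i$ with $k_i\to s$ in $T$ and $h_i\in H$. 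Because $T$ is a topological group, $k_i^{-1}\to s^{-1}$, whence $h_ix=k_i^{-1}(t_ix)\to s^{-1}z$ and likewise $h_iy\to s^{-1}z$. Thus $(x,y)\in P(H\curvearrowright X)$; as $x,y$ were arbitrary, $H\curvearrowright X$ is proximal.

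\emph{Minimality, the normal case.} Assume now $H\lhd T$ is co-compact and closed and $\mathscr{X}$ is minimal proximal; I claim $X$ is itself $H$-minimal. Let $M\subseteq X$ be an $H$-minimal set (it exists by compactness). Normality makes each translate $tM$ again $H$-invariant (since $Ht=tH$ gives $H(tM)=t(HM)=tM$) and $H$-minimal, and by minimality of $\mathscr{X}$ together with Lemma~\ref{1.12} the set $TM$ is closed and $T$-invariant, hence $TM=X$. The distinct translates $tM$ therefore partition $X$, and $tH\mapsto tM$ is well defined (as $hM=M$) and continuous from $T/H$ into $2^X$ by Lemma~\ref{1.12}. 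Consequently $R=\bigcup_{tH\in T/H}tM\times tM$ is closed—it is the image of the closed set $\{(tH,x,x')\colon x,x'\in tM\}$ under the projection $T/H\times X\times X\to X\times X$ along the compact factor $T/H$—so $q\colon X\to X/R$ is a factor map onto a compact Hausdorff flow $T\curvearrowright X/R$ on which $H$ acts trivially; the action thus factors through the compact group $T/H$. Being a factor of a proximal flow, $X/R$ is proximal; but a proximal flow with compact acting group is a single point (a convergent subnet of the pushing net lies in the compact group, and its limit is a homeomorphism collapsing the two points). Hence $X/R$ is trivial, i.e.\ $M=X$, so $H\curvearrowright X$ is minimal.

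\emph{Assembling the induction.} I would induct on $n$. For $n=1$ the two steps above finish the argument. For $n\ge2$, the top group $H_{n-1}\lhd T$ is normal and co-compact in $T$, so by the base case $H_{n-1}\curvearrowright X$ is minimal proximal; then $H=H_0\lhd\dotsm\lhd H_{n-1}$ is a subnormal chain of length $n-1$ inside $H_{n-1}$ with $H_0$ co-compact in $H_{n-1}$, and the inductive hypothesis applied to the flow $H_{n-1}\curvearrowright X$ gives that $H\curvearrowright X$ is minimal proximal.

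The main obstacle I expect is the normal-case minimality: checking that the partition of $X$ into the $H$-minimal translates $\{tM\}$ is a genuine \emph{closed} $T$-invariant equivalence relation, so that $X/R$ is a bona fide compact Hausdorff factor carrying a $T/H$-action. This is precisely where both co-compactness (through Lemma~\ref{1.12}) and normality are used, and where one must verify that the Vietoris-continuous map $tH\mapsto tM$ yields a closed $R$. Once $X/R$ is in hand, the clash between ``factor of a proximal flow'' and ``compact acting group'' is immediate.
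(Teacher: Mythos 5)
Your proposal is correct, and it in fact proves the stronger Corollary~\ref{4.4} (subnormal $H$); for the statement as given, where $H\in\mathcal{S}^{\textsl{ncc}}(T)$ is already normal, only your base case $n=1$ is used. Your proximality step is the same argument the paper uses for Theorem~\ref{4.2}: compactness of $T/H$ plus Lemma~\ref{1.8}(c) to replace the proximality net $t_i$ by $h_i=k_i^{-1}t_i\in H$. Where you genuinely diverge is the minimality step. The paper (which quotes \cite[Lem.~II.3.2]{G76} for Theorem~\ref{2.2} itself, but reproves and generalizes it as Corollary~\ref{4.4}) never extracts minimality from proximality directly: it first proves the inheritance theorem (Theorem~\ref{3.1}), i.e.\ that $H\curvearrowright X$ is pointwise a.p.\ --- a Zorn's-lemma argument that transports an a.p.\ point of $\overline{Hx}$ back to $x$ using compactness of $T/H$ and uses no proximality at all --- and then observes that in a pointwise a.p.\ proximal flow any two $H$-minimal orbit closures meet at a proximal limit point and hence coincide. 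You instead take one $H$-minimal set $M$, use normality and Lemma~\ref{1.12} to show its translates $\{tM\}$ form a closed $T$-invariant partition with $tH\mapsto tM$ Vietoris-continuous, pass to the quotient flow, and kill it by the principle that a proximal flow whose action factors through a compact group is trivial (made rigorous again by Lemma~\ref{1.8}(c)). This is essentially Glasner's original argument, and it re-derives in an ad hoc way what the paper's Section~\ref{s6} (Lemma~\ref{6.2}, Theorem~\ref{6.5}) develops systematically. The trade-off: your route is self-contained and bypasses the pointwise a.p.\ inheritance theorem entirely, but the quotient construction is tied to proximality; the paper's route isolates the proximality-free content (Theorem~\ref{3.1}, its main theorem), after which the present statement becomes a two-line corollary.
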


See Theorem~\ref{4.2} and Corollary~\ref{4.4} below for generalizations of Theorem~\ref{2.2}.
Under another kind of additional condition---disjointness and co-compactness---in place of ``\textsl{ns}'', there is a result as follows:

\begin{thm}[{cf.~\cite[Prop.~IV.5.1]{G76}}]\label{2.3}
Let $H$ be a co-compact closed subgroup of $T$. Then
\begin{enumerate}
\item[] $T\curvearrowright X\times T/H$, defined by $(t, (x,sH))\mapsto(tx, tsH)$,
\end{enumerate}
is minimal iff $\mathscr{X}$ is minimal and $H\curvearrowright X$ is a minimal subflow of $\mathscr{X}$.
\end{thm}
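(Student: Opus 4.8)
The plan is to establish both implications directly from the orbit-closure formulation of minimality in \ref{1.4}, exploiting two structural features of the product: the left-translation action $T\curvearrowright T/H$ is transitive, and $H$ is precisely the stabilizer of the base coset $eH$, so the fiber $X\times\{eH\}$ is $H$-invariant with $h\cdot(x,eH)=(hx,eH)$ for $h\in H$.

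For the sufficiency direction ($\mathscr{X}$ and $H\curvearrowright X$ minimal $\Rightarrow$ product minimal) I would fix a point $(x_0,eH)$, set $M=\overline{T\cdot(x_0,eH)}$, and examine its fiber $M_{eH}=M\cap(X\times\{eH\})$. This fiber is a nonempty closed $H$-invariant subset of $X\times\{eH\}\cong X$, so minimality of $H\curvearrowright X$ forces $M_{eH}=X\times\{eH\}$. I would then spread this full fiber across the base: since each $t\in T$ acts as a homeomorphism of $X$ and $tH$ ranges over all of $T/H$, the $T$-invariance of $M$ yields
\[
M\supseteq T\cdot(X\times\{eH\})=\{(tx,tH)\,|\,t\in T,\ x\in X\}=X\times T/H.
\]
To reach an arbitrary starting point $(x_0,s_0H)$ I would write it as $s_0\cdot(s_0^{-1}x_0,eH)$ and use $Ts_0=T$ to identify its orbit closure with that of $(s_0^{-1}x_0,eH)$, which gives minimality of the product.

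For the necessity direction I would first note $\mathscr{X}$ is minimal, being a factor of the product under the projection $\pi\colon X\times T/H\to X$. To get minimality of $H\curvearrowright X$, fix $x_0,x_1\in X$; minimality of the product supplies a net $t_i$ with $t_ix_0\to x_1$ and $t_iH\to eH$ in $T/H$. The crux is to upgrade $t_iH\to eH$ to honest convergence in $T$ modulo $H$: Lemma~\ref{1.8}(c) provides, along a subnet, a net $k_{i'}\to e$ in $T$ and $h_{i'}\in H$ with $t_{\xi(i')}=k_{i'}h_{i'}$. Passing to a further subnet so that $h_{i'}x_0\to y$ (compactness of $X$) and invoking joint continuity of the phase map, I obtain $k_{i'}(h_{i'}x_0)\to e\cdot y=y$; but the left side equals $t_{\xi(i')}x_0\to x_1$, whence $y=x_1$ and $x_1\in\overline{Hx_0}$. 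As $x_1$ is arbitrary, $\overline{Hx_0}=X$ for every $x_0$, i.e. $H\curvearrowright X$ is minimal.

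I expect the main obstacle to be exactly this last upgrading step in the necessity direction, since convergence in the quotient $T/H$ carries no information in $T$ on its own; Lemma~\ref{1.8}(c) together with compactness of $X$ and joint continuity of the action is what bridges the gap. By contrast the sufficiency direction is conceptually clean: one full fiber, once produced from minimality of $H\curvearrowright X$, is dragged over all of $T/H$ by the transitive base action. Co-compactness of $H$ enters only to ensure $X\times T/H$ is a compact phase space, so that the notion of minimal flow applies in the first place.
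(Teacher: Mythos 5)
Your proof is correct, and while the sufficiency half matches the paper's argument in substance, your necessity half takes a genuinely different route. For sufficiency, the paper picks a minimal subset $W\subseteq X\times T/H$ and shows its fibre $Y=\{x\,|\,(x,[e])\in W\}$ over the base coset equals $X$ by $H$-minimality, then translates the full fibre around the base exactly as you do; your variant, starting from the orbit closure $M=\overline{T(x_0,eH)}$, is marginally tidier because your fibre is nonempty by construction, whereas the paper's $Y\not=\emptyset$ tacitly uses that the projection of $W$ to $T/H$ is a closed invariant subset of the transitive base. For necessity, the paper argues structurally: it invokes Zorn's lemma to choose an $H$-minimal set $X_0\subseteq X$, applies Lemma~\ref{1.12} (whose proof contains the same subnet manoeuvre you use) to conclude that $T(X_0\times\{[e]\})$ is closed and $T$-invariant, hence all of $X\times T/H$ by minimality, and then extracts $X=HX_0=X_0$ from the observation that $t(x,[e])\in X\times\{[e]\}$ forces $t\in H$. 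You instead prove the pointwise statement directly---every $H$-orbit closure is $X$---by taking the net furnished by density of the $T$-orbit of $(x_0,eH)$, factoring $t_{\xi(i')}=k_{i'}h_{i'}$ with $k_{i'}\to e$ via Lemma~\ref{1.8}(c), and using compactness of $X$ together with joint continuity of the phase map to force $x_1\in\overline{Hx_0}$; all steps check out. Your route is more elementary (no Zorn, no Lemma~\ref{1.12}) and makes visible that co-compactness is needed only so that $X\times T/H$ is a legitimate compact Hausdorff phase space, whereas the paper's necessity argument consumes co-compactness substantively inside Lemma~\ref{1.12}. What the paper's route buys is modularity: Lemma~\ref{1.12} isolates a reusable structural fact (the $T$-saturation of an $H$-invariant closed set is closed when $H$ is co-compact), at the cost of an appeal to Zorn and a less self-contained chain of reductions.
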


\begin{proof}
\item Necessity: Clearly, $\mathscr{X}$ is minimal. For proving that $H\curvearrowright X$ is a minimal subflow of $\mathscr{X}$, let $X_0$ be a minimal subset of $H\curvearrowright X$ and simply write $[e]=H\in T/H$. Then as $H(X_0\times\{[e]\})=X_0\times\{[e]\}$, it follows by Lemma~\ref{1.12} that $T(X_0\times\{[e]\})$ is a closed invariant subset of $X\times T/H$ by $T$. So
$$T(X_0\times\{[e]\})=X\times T/H\supset X\times\{[e]\}.$$
As $t(x,[e])\in X\times\{[e]\}\Rightarrow t\in H$ and by $X\times\{[e]\}=\bigcup_{t\in H}tX_0\times\{[e]\}$, it follows easily that $X=HX_0=X_0$. Hence $H\curvearrowright X$ is a minimal subflow of $\mathscr{X}$.

\item Sufficiency: Let $W$ be a minimal subset of $X\times T/H$ under $T$. We can define a closed subset of $X$, $Y=\{x\in X\,|\, (x,[e])\in W\}\not=\emptyset$. By $HY=Y$ and minimality of $H\curvearrowright X$, we have that $Y=X$. Thus, $X\times\{[e]\}\subseteq W$, and then for any $t\in T$, $t(X\times\{[e]\})=X\times\{tH\}$. So $W=X\times T/H$. The proof is completed.
\end{proof}

The normality of $H$ plays only a role in the ``only if'' part of Theorem~\ref{2.1}-(1) (see Thm.~\ref{3.1}-(2) below). If $H$ is not normal, then this part is generally false by constructing a counterexample as follows:

\begin{exa}\label{2.4}
Let $T=\textrm{GL}(2,\mathbb{R})$, the general linear group with the usual topology, which is LC. Let
\begin{enumerate}
\item[] $H=\left\{\left[\begin{smallmatrix}x&0\\y&z\end{smallmatrix}\right]\,|\, x,y,z\in\mathbb{R}\textrm{ s.t. }xz\not=0\right\}$.
\end{enumerate}
Then $H$ is a non-normal closed subgroup of $T$. Moreover, $H$ is syndetic and so co-compact in $T$. Indeed, if
\begin{enumerate}
\item[] $K=\left\{\left[\begin{smallmatrix}a&b\\c&d\end{smallmatrix}\right]\,|\, a,b,c,d\in[-1,1]\textrm{ s.t. }ad-bc=1\right\}$,
\end{enumerate}
then $K\in2^T$ and $T=KH$ (see \cite[pp.392--393]{K67}). Let $T\curvearrowright\mathbb{R}^2$ be the noncompact flow with the phase mapping
\begin{enumerate}
\item[]
$T\times\mathbb{R}^2\rightarrow\mathbb{R}^2,\quad \left(\left[\begin{smallmatrix}a&b\\c&d\end{smallmatrix}\right],\left[\begin{smallmatrix}x\\y\end{smallmatrix}\right]\right)\mapsto
\left[\begin{smallmatrix}a&b\\c&d\end{smallmatrix}\right]\left[\begin{smallmatrix}x\\y\end{smallmatrix}\right]=\left[\begin{smallmatrix}ax+by\\cx+dy\end{smallmatrix}\right]$.
\end{enumerate}
Let $\mathbb{P}^1$ denote the real projective line space; i.e., the space of all $1$-dimensional linear subspaces of $\mathbb{R}^2$. Then the above natural action of $T$ on $\mathbb{R}^2$ induces an action of $T$ on $\mathbb{P}^1$. It is clear that
\begin{enumerate}[(1)]
\item $T\curvearrowright\mathbb{P}^1$, with phase mapping $T\times\mathbb{P}^1\rightarrow\mathbb{P}^1$, is a homogeneous flow (i.e., $Tx=\mathbb{P}^1\ \forall x\in \mathbb{P}^1$).
\end{enumerate}
Let $x\in\mathbb{P}^1$. Since
$\left[\begin{smallmatrix}1&k\\0&1\end{smallmatrix}\right]x\rightarrow L$ as $k\to\infty$,
where $L\in\mathbb{P}^1$ is the line through the points $(-1,0)$ and $(1,0)$ of $\mathbb{R}^2$. Thus:
\begin{enumerate}[(2)]
\item $T\curvearrowright\mathbb{P}^1$ is a minimal proximal flow.
\end{enumerate}
We now consider the induced subflow $H\curvearrowright\mathbb{P}^1\colon H\times\mathbb{P}^1\rightarrow\mathbb{P}^1$. We can assert the following:
\begin{enumerate}[(3)]
\item $H\curvearrowright\mathbb{P}^1$ is proximal but not pointwise a.p. (and hence not minimal).
\end{enumerate}
\begin{proof}
Suppose to the contrary that $H\curvearrowright\mathbb{P}^1$ is pointwise a.p. Since $H$ is a syndetic subgroup of $T$, $H\curvearrowright\mathbb{P}^1$ is proximal by (2) so $\mathbb{P}^1$ contains a unique minimal set under $H$. Thus, $H\curvearrowright\mathbb{P}^1$ is minimal. However, since the line through the points $(0,1)$ and $(0,-1)$ of $\mathbb{R}^2$ is a fixed point under $H$, $\mathbb{P}^1=\{pt\}$, contrary to $\dim\mathbb{P}^1=1$. The proof is completed.
\end{proof}

\begin{enumerate}[(4)]
\item $T\curvearrowright\mathbb{P}^1\times T/H$ is not a minimal flow.
\end{enumerate}
\begin{proof}
By Theorem~\ref{2.3} and (3) above.
\end{proof}
In fact, \textit{$H$ is not subnormal in $T$} according to (2), (3) and Theorem~\ref{3.1} below.
\end{exa}

Note that if $\mathscr{X}$ is not equicontinuous/a.p., then the minimality condition of $\mathscr{X}$ is critical for Theorem~\ref{2.1}-(2), as shown by the following

\begin{thm}\label{2.5}
There exists a flow $\mathbb{R}\curvearrowright X\colon \mathbb{R}\times X\xrightarrow{(t,x)\mapsto t\cdot x}X$ such that
$\mathscr{O}_{\mathbb{R}}\colon x\mapsto \overline{\mathbb{R}\cdot x}$ is continuous (so it is pointwise a.p.; see Thm.~\ref{3.5}A); however,
$\mathscr{O}_\mathbb{Z}\colon x\mapsto\overline{\mathbb{Z}\cdot x}$ is not continuous from $X$ into $2^X$.
(Note here that $\mathbb{Z}\in\mathcal{S}^\textsl{ns}(\mathbb{R})$.)
\end{thm}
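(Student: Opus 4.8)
The plan is to exhibit an explicit flow realizing the stated behavior, taking as phase space a one-parameter family of circles whose angular speeds vary continuously with the parameter. Concretely, I would set $X=\mathbb{T}\times[1,2]$ with $\mathbb{T}=\mathbb{R}/\mathbb{Z}$, and define the $\mathbb{R}$-flow by
$$t\cdot(\theta,s)=(\theta+ts,\,s).$$
One first dispatches the routine points: $X$ is compact Hausdorff, the phase map is jointly continuous, and the flow axioms hold (in additive notation, $(t_1+t_2)\cdot x=t_1\cdot(t_2\cdot x)$ is immediate). Here each $s\in[1,2]$ plays the role of the angular frequency of the circle $\mathbb{T}\times\{s\}$, and the key design feature is that $s$ ranges over an interval meeting both $\mathbb{Q}$ and its complement.

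Next I would compute the two orbit-closure maps. For the full group, since $s\ge 1>0$ the set $\{ts\bmod 1:t\in\mathbb{R}\}$ fills $\mathbb{T}$, so $\mathscr{O}_{\mathbb{R}}(\theta,s)=\mathbb{T}\times\{s\}$; this depends on $s$ alone and continuously, whence $\mathscr{O}_{\mathbb{R}}$ is continuous into $2^X$. Consequently each orbit-closure is a single minimal periodic circle, so by \ref{1.4} every point is a.p.\ and $\mathbb{R}\curvearrowright X$ is pointwise a.p.\ (alternatively this is immediate from Thm.~\ref{3.5}A). For the integer subgroup, the orbit of $(\theta,s)$ is $\{(\theta+ns\bmod 1,s):n\in\mathbb{Z}\}$, and the classical rotation dichotomy applies: when $s$ is irrational the orbit is dense in the circle, giving $\mathscr{O}_{\mathbb{Z}}(\theta,s)=\mathbb{T}\times\{s\}$, whereas when $s=p/q$ in lowest terms the closure is the finite set of $q$ equally spaced points.

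Finally I would deduce the discontinuity of $\mathscr{O}_{\mathbb{Z}}$. Fixing a point $(\theta_0,s_0)$ with $s_0\in\mathbb{Q}\cap[1,2]$, the value $\mathscr{O}_{\mathbb{Z}}(\theta_0,s_0)$ is a proper finite subset of $\mathbb{T}\times\{s_0\}$, so there is a neighborhood $V$ of it in $X$ missing some point of that circle. Choosing irrationals $s_i\to s_0$, the sets $\mathscr{O}_{\mathbb{Z}}(\theta_0,s_i)=\mathbb{T}\times\{s_i\}$ are not eventually contained in $V$, which contradicts upper semicontinuity of $\mathscr{O}_{\mathbb{Z}}$ at $(\theta_0,s_0)$; hence $\mathscr{O}_{\mathbb{Z}}$ is not continuous.

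The main obstacle is precisely this last step. Since the orbit-closure map is always lower semicontinuous (see \ref{1.1}), the failure of continuity must be located in the upper-semicontinuous half, and it is there that the rational/irrational alternation does the work. The point to handle carefully is that the Vietoris upper limit of the full circles $\mathbb{T}\times\{s_i\}$ is the full circle $\mathbb{T}\times\{s_0\}$, which strictly contains the finite set $\mathscr{O}_{\mathbb{Z}}(\theta_0,s_0)$; this strict containment, guaranteed by the density of the irrationals in $[1,2]$, is what forces the jump and distinguishes the behavior of $\mathbb{Z}$ from that of $\mathbb{R}$.
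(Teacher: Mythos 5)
Your proposal is correct and is essentially the paper's own construction: the paper uses the closed unit disk with flow $(t,x)\mapsto e^{2\pi i|x|t}x$ (circles of radius $r$ rotating at speed $r$), which is the same idea as your annulus $\mathbb{T}\times[1,2]$ with speed parameter $s$, and both arguments hinge on the identical rational/irrational dichotomy for the $\mathbb{Z}$-orbit closures. The only difference is cosmetic --- your parameter interval $[1,2]$ avoids the fixed center point $0$ of the disk, which in the paper causes no harm since $\mathbb{S}_r\to\{0\}$ in the Vietoris topology as $r\to 0$.
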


\begin{proof}
Let
$X=\{z\in\mathbb{C}\colon |z|\le 1\}$. Then we can write $X={\bigcup}_{0\le r\le 1}\mathbb{S}_r$,
where $\mathbb{S}_r=\{z\in\mathbb{C}\colon|z|=r\}$ for all $r\in[0,1]$. The continuous-time flow $\mathbb{R}\times X\rightarrow X$ is defined by
$(t,x)\mapsto t\cdot x=e^{2\pi i|x|t}x$.
Clearly, $\overline{\mathbb{R}\cdot x}=\mathbb{S}_{|x|}$ for all $x\in X$. If $x_i\to x$ in $X$, then  $\overline{\mathbb{R}\cdot x_i}\to\overline{\mathbb{R}\cdot x}$. However, if $r$ with $0<r<1$ is rational and $\{r_i\}$ is a sequence of irrational numbers with $r_i\to r$, then  for $x_i\in\mathbb{S}_{r_i}\to x\in\mathbb{S}_r$, as $i\to\infty$,
$$
\overline{\mathbb{Z}\cdot x_i}=\mathbb{S}_{r_i}\to\mathbb{S}_r\not=\overline{\mathbb{Z}\cdot x}.
$$
Thus, $\mathscr{O}_\mathbb{Z}\colon X\ni x\mapsto\overline{\mathbb{Z}\cdot x}\in 2^X$ is not a continuous mapping. The proof is completed.
\end{proof}

\section{\bf Inheritance theorems}\label{s3}
Example~\ref{2.4} also shows that the disjointness condition in Theorem~\ref{2.3} is crucial.
The main purpose of this paper is to generalize Theorem~\ref{2.1} without the proximality and disjointness as follows:

\begin{thm}[inheritance by \textsl{sncc}-subgroup]\label{3.1}
Let $\mathscr{X}$ be any flow and $H$ a closed subgroup of $T$, let $H\curvearrowright X$ be a subflow of $\mathscr{X}$. Then:
\begin{enumerate}[(1)]
\item[$(1)$] If $H\in\mathcal {S}^\textsl{sncc}(T)$ and $x\in X$ is a.p. for $T\curvearrowright X$, then $x$ is a.p. for $H\curvearrowright X$.
\item[$(2)$] If $H$ is co-compact in $T$ and $x\in X$ is a.p. for $H\curvearrowright X$, then $x$ is a.p. for $T\curvearrowright X$.
\item[$(3)$] If $\mathscr{X}$ is minimal and $H\in\mathcal {S}^\textsl{ncc}(T)$, then $\{\overline{Hx}\,|\,x\in X\}$ is a $T$-invariant partition of $X$ such that $\mathscr{O}_H\colon X\rightarrow2^X$ is continuous.
\end{enumerate}
Consequently, for every $H\in\mathcal {S}^\textsl{sncc}(T)$, a point $x$ in $X$ is a.p. for $T\curvearrowright X$ iff it is a.p. for $H\curvearrowright X$.
\end{thm}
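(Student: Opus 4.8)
The plan is to derive the concluding equivalence from parts (1) and (2): since every $H\in\mathcal{S}^\textsl{sncc}(T)$ is in particular co-compact, part (2) turns $H$-almost periodicity of $x$ into $T$-almost periodicity, while part (1) supplies the reverse implication, so the \emph{iff} is immediate once (1) and (2) are established. The whole argument is organized around the characterization \ref{1.4}: a point is a.p.\ for a group action exactly when its orbit closure is a minimal set. The genuinely new content lies in (1) and (3).

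For part (2) I would argue ``upward''. Put $X_0=\overline{Hx}$; it is $H$-invariant and closed, so by Lemma~\ref{1.12} the set $T\overline{Hx}$ is closed and $T$-invariant, whence $\overline{Tx}=T\overline{Hx}$. If $x$ is a.p.\ for $H$ then $\overline{Hx}$ is $H$-minimal by \ref{1.4}. Given $y\in\overline{Tx}=T\overline{Hx}$, write $t_0^{-1}y\in\overline{Hx}$; $H$-minimality yields $x\in\overline{H t_0^{-1}y}\subseteq\overline{T t_0^{-1}y}=\overline{Ty}$, so $\overline{Tx}\subseteq\overline{Ty}$ and hence $\overline{Ty}=\overline{Tx}$. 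Thus $\overline{Tx}$ is $T$-minimal and $x$ is a.p.\ for $T$.

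For part (1) I would reduce, along a subnormal chain $H=H_0\lhd\dots\lhd H_n=T$ (using Lemma~\ref{1.7}(b) to see each $H_i$ is co-compact in $H_{i+1}$), to a single normal co-compact step and then invoke the \emph{partition} half of part (3). Concretely, if $x$ is a.p.\ for $H_{i+1}$ then $M=\overline{H_{i+1}x}$ is $H_{i+1}$-minimal; applying (3) to the minimal flow $H_{i+1}\curvearrowright M$ with the normal co-compact subgroup $H_i$ shows that the $H_i$-orbit closures partition $M$ into $H_i$-minimal sets, so in particular $\overline{H_i x}$ is $H_i$-minimal and $x$ is a.p.\ for $H_i$. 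Descending the chain gives a.p.\ for $H=H_0$.

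The heart, and the main obstacle, is part (3), specifically the \emph{continuity} of $\mathscr{O}_H$. First, minimality of $T\curvearrowright X$ forces every block to be $H$-minimal: choosing an $H$-minimal $M_0\subseteq\overline{Hx}$, normality makes each $tM_0$ $H$-minimal while Lemma~\ref{1.12} makes $TM_0$ closed and $T$-invariant, so $TM_0=X$; then $x\in tM_0$ for some $t$ forces $\overline{Hx}=tM_0$. The same identity $TB=X$ for $B=\overline{Hx}$ shows every block is a translate $tB$, i.e.\ $T$ acts transitively on blocks; so, with the closed subgroup $H^\ast=\{t\in T\colon t\overline{Hx}=\overline{Hx}\}\supseteq H$, the assignment $\Psi\colon T/H^\ast\to2^X$, $tH^\ast\mapsto t\overline{Hx}$, is a well-defined bijection onto the blocks, continuous by Lemma~\ref{1.12} after lifting convergence in $T/H^\ast$ to $T/H$ via Lemma~\ref{1.8}. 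The decisive move is to recast upper semicontinuity as a closed-graph statement: the block map $\pi\colon X\to T/H^\ast$ has graph $\{(z,q)\colon z\in\Psi(q)\}$, which is closed because $\Psi$ is (upper semi-)continuous; since $T/H^\ast$ is compact (a quotient of the compact $T/H$) and Hausdorff (Lemma~\ref{1.8}(b)), the standard closed-graph criterion makes $\pi$ continuous, and then $\mathscr{O}_H=\Psi\circ\pi$ is continuous. I expect this recasting to be the crux, because lower semicontinuity of $\mathscr{O}_H$ (\ref{1.1}) together with the partition property is \emph{not} enough on its own---Theorem~\ref{2.5} exhibits exactly such a configuration with $\mathscr{O}_H$ discontinuous---so minimality of $T\curvearrowright X$ must be used essentially, and it enters precisely through $TM_0=X$ and the resulting transitivity of $T$ on blocks.
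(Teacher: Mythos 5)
Your proof is correct, and there is no circularity in it (you establish the partition half of (3) independently of (1)), but it takes a genuinely different route from the paper's in all three parts. The paper treats (1) and (2) symmetrically: in each case Zorn's lemma produces an a.p.\ point (an $H$-a.p.\ point of $\overline{Hx}$ for (1), a $T$-a.p.\ point of $\overline{Tx}$ for (2)), which is then transported back to $x$ by lifting a convergent net from the compact quotient $T/H$ via Lemma~\ref{1.8}(c), with normality used only in (1) to move $H$-minimality across translates; the paper then deduces the partition half of (3) \emph{from} (1), and proves continuity of $\mathscr{O}_H$ by contradiction, using lower semicontinuity of $\mathscr{O}_H$ and compactness of $T/H$ to trap a limit $L$ with $\overline{Hx}\subsetneq L\subseteq k\overline{Hx}=\overline{Hkx}$, contradicting the partition property. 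You instead prove (2) with no Zorn and no nets, directly from the identity $\overline{Tx}=T\overline{Hx}$ supplied by Lemma~\ref{1.12}, which is more elementary than the paper's argument; you invert the paper's logical order between (1) and the partition statement, proving the partition first (Zorn for an $H$-minimal $M_0$, normality for the translates $tM_0$, and $TM_0=X$ from Lemma~\ref{1.12} plus minimality) and reading (1) off it by the same induction along the subnormal chain; and you prove continuity structurally, by modeling the block space as $T/H^\ast$ and invoking the closed-graph criterion for the block map into the compact Hausdorff space $T/H^\ast$. What each approach buys: the paper's contradiction argument for (3) is shorter and keeps Section~\ref{s3} free of later machinery, whereas your construction yields more---your stabilizer $H^\ast$ is exactly the envelope $E_x[H]$ of Section~\ref{s5} (by \ref{5.0}b and the partition property), so your map $\Psi$ is essentially the isomorphism of Theorem~\ref{6.5}, obtained here without Theorem~\ref{5.3} because $H^\ast$ is visibly a closed subgroup. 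Your closing diagnosis that minimality must enter essentially, with Theorem~\ref{2.5} as the witness, is also exactly right.
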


\begin{proof}
\item (1): Suppose $x\in X$ is a.p. for $T$. First, assume $H\in\mathcal {S}^\textsl{ncc}(T)$. By Zorn's lemma, it follows that we can select a point $y\in \overline{Hx}$ that is a.p. for $H$. Since $\overline{Tx}$ is a minimal set for $T$ and $y\in \overline{Tx}$, there is a net $\{t_n\,|\,n\in D\}$ in $T$ such that $t_ny\to x$. Since $T/H$ is a compact group, we can find an element $k\in T$ such that $t_nH\to kH$ in $T/H$. By Lemma~\ref{1.8}c, we can assume that there is a net $k_n\in T$ such that $k_n\to k$ in $T$ and $t_nH=k_nH$ for all $n\in D$. By $t_nH=k_nH$, it follows that $k_n^{-1}t_n\in H$ for all $n\in D$. Thus $t_ny\to x$ and $k_n^{-1}\to k^{-1}$ implies that $k_n^{-1} t_ny\to k^{-1}x$ and $k^{-1}x\in \overline{Hy}$ is a.p. for $H$. Finally, by normality of $H$, $x$ is a.p. for $H$. Indeed, for each point $z\in\overline{Hx}$, we have that $k^{-1}z\in\overline{Hk^{-1}x}$ and $k^{-1}\overline{Hz}=\overline{Hk^{-1}x}$, and, $\overline{Hz}=\overline{Hx}$. Thus $\overline{Hx}$ is minimal for $H$. Since $\overline{Hx}$ is compact Hausdorff, $x$ is a.p. for $H$.

Next, suppose $H\in\mathcal {S}^\textsl{sncc}(T)$. Then there exists a finite sequence of closed subgroups of $T$:
$H_0=H\lhd H_1\lhd\dotsm\lhd H_n=T$, as in Def.~\ref{1.9}. Clearly, we have that $H_i\in \mathcal{S}^\textsl{ncc}(H_{i+1})$. So by induction, it follows easily that $x$ is an a.p. point for $H$.

\item (2): Suppose $x\in X$ is an a.p. point for $H$. Without loss of generality we may assume $X=\overline{Tx}$ and by Zorn's lemma we can select an a.p. point $y\in X$ under $T$. Then every point of $\overline{Hy}$ is a.p. under $T$ for $\overline{Hy}\subseteq\overline{Ty}$. Take a net $t_i\in T$ with $t_ix\to y$.
Since $T/H$ is a compact coset space, by Lemma~\ref{1.8}c we can take elements $s_i,k\in T$ such that (a subnet of)
$s_i\to k$ (so $s_i^{-1}\to k^{-1}$) in $T$ and $s_iH=t_iH$. Then $s_i^{-1}t_ix\to z:=k^{-1}y$ and $s_i^{-1}t_i\in H$. Thus, $z\in \overline{Hx}$ and $z$ is a.p. for $T$. Then $\overline{Hx}=\overline{Hz}\,(\subseteq\overline{Tz})$ consists of $T$-a.p. points. Whence $x$ is an a.p. point for $T$.

\item (3): By (1) and normality of $H$, $\{\overline{Hx}\,|\,x\in X\}$ is a $T$-invariant partition of $X$ into $H$-minimal sets. Now let $x_n\to x$ in $X$. We need only prove that $\overline{Hx_n}\to \overline{Hx}$ in $2^X$. Suppose this is not true; then we may assume (a subnet of) $\overline{Hx_n}\to L\in 2^X$ such that  $\overline{Hx}\subsetneq L$. Since $\mathscr{O}_H\colon X\rightarrow2^X$ is lower semi-continuous and $X=\overline{\bigcup\{t\overline{Hx}\,|\,t\in T\}}$, hence for $\varepsilon_n\in\mathscr{U}$ with $\varepsilon_n\to\Delta_X$, we can choose $t_n\in T$, with $t_nx$ sufficiently close to $x_n$, such that $\varepsilon_n[t_n\overline{Hx}]\supseteq \overline{Hx_n}$ for all $n$. By compactness of $T/H$ and using Lemma~\ref{1.8}c, we can assume $t_n\to k$ in $T$ and $t_nH\to kH$ in $T/H$ so that (a subnet of) $t_n\overline{Hx}\to k\overline{Hx}$ in $2^X$. Then $\overline{Hx}\subsetneq L\subseteq k\overline{Hx}=\overline{Hkx}$, contrary to that $\{\overline{Hz}\,|\,z\in X\}$ is a partition of $X$ into $H$-minimal sets.
The proof is completed.
\end{proof}

\begin{cor}[{\cite[Thm.~4.04]{GH} and \cite[Prop.~2.8]{E69}}]\label{3.2}
Let $\mathscr{X}$ be a flow and $H\in\mathcal {S}^\textsl{ns}(T)$, then $x\in X$ is a.p. for $T\curvearrowright X$ iff $x$ is a.p. for $H\curvearrowright X$.
\end{cor}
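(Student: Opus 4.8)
The plan is to derive this entirely from Theorem~\ref{3.1} by checking that a normal syndetic closed subgroup already lies in the family $\mathcal{S}^\textsl{sncc}(T)$, so that the two directions of the stated equivalence become special cases of parts (1) and (2) of that theorem. Thus no new dynamics are required; the whole content is a pair of inclusions between the subgroup families introduced in Section~\ref{s1}.

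First I would record that syndetic implies co-compact. If $H\in\mathcal{S}^\textsl{ns}(T)$, then $H$ is a syndetic closed subgroup, so by the remark following Lemma~\ref{1.7} there is a $K\in2^T$ mapped onto $T/H$ by $\rho\colon T\to T/H$; hence $T/H$ is compact and $H$ is co-compact. Since $H$ is moreover normal, the trivial chain $H\lhd T$ witnesses that $H$ is subnormal, so $H\in\mathcal{S}^\textsl{ncc}(T)\subseteq\mathcal{S}^\textsl{sncc}(T)$.

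With these observations the equivalence is immediate. For the forward implication, suppose $x\in X$ is a.p. for $T\curvearrowright X$; since $H\in\mathcal{S}^\textsl{sncc}(T)$, Theorem~\ref{3.1}-(1) yields that $x$ is a.p. for $H\curvearrowright X$. For the converse, suppose $x$ is a.p. for $H\curvearrowright X$; since $H$ is co-compact, Theorem~\ref{3.1}-(2) yields that $x$ is a.p. for $T\curvearrowright X$. Combining the two gives the asserted ``iff''.

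As for difficulty, there is essentially no obstacle, precisely because all the analytic work has been absorbed into Theorem~\ref{3.1}. The only point deserving care is invoking the correct containment for each direction: co-compactness alone suffices for part (2), whereas part (1) also needs subnormality, which here is supplied for free by normality. This is exactly the place where the classical hypothesis ``$H\in\mathcal{S}^\textsl{ns}(T)$'' of Corollary~\ref{3.2} turns out to be strictly stronger than what Theorem~\ref{3.1} actually demands.
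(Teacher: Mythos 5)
Your proposal is correct and is exactly the argument the paper intends: Corollary~\ref{3.2} is stated as an immediate consequence of Theorem~\ref{3.1}, resting on the observations that a syndetic closed subgroup is co-compact (the remark following Lemma~\ref{1.7}) and that normality gives the trivial subnormal chain $H\lhd T$, so $\mathcal{S}^\textsl{ns}(T)\subseteq\mathcal{S}^\textsl{ncc}(T)\subseteq\mathcal{S}^\textsl{sncc}(T)$ and the ``Consequently'' clause of Theorem~\ref{3.1} applies. Your closing remark about which direction needs which hypothesis (co-compactness for part (2), subnormality only for part (1)) matches the paper's own discussion of where normality plays a role.
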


Corollary~\ref{3.2} is still true if $X$ is only LC instead of $X$ being compact; see \cite[Prop.~2.8]{E69}. In fact, if $X$ is LC and $x$ is a.p. for $H\in\mathcal {S}^\textsl{ns}(T)$, then $\overline{Hx}$ is compact and $\overline{Tx}=K\overline{Hx}$ for some $K\in2^T$ so that $\overline{Tx}$ is a compact set in $X$. Although the compactness of $X$ plays a role in our proof of Theorem~\ref{3.1}-(2), it turns out that we can conclude the following in our \textsl{sncc}-setting.

\begin{prop}\label{3.3}
Let $\mathscr{X}$ be a flow, with $X$ an LC Hausdorff space not necessarily compact. Then:
\begin{enumerate}[(1)]
\item[$(1)$] If $H$ is a co-compact closed subgroup of $T$ and $x\in X$ is a.p. for $H\curvearrowright X$, then $\overline{Tx}$ is compact minimal under $T$.
\item[$(2)$] If $H\in\mathcal {S}^\textsl{sncc}(T)$, then $x\in X$ is a.p. for $T\curvearrowright X$ iff $x$ is a.p. for $H\curvearrowright X$.
\end{enumerate}
\end{prop}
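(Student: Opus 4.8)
The plan is to reduce everything to the compact phase space case already settled in Theorem~\ref{3.1}; the only genuinely new ingredient is that local compactness of $X$ lets me trap the relevant orbits inside compact sets, after which Theorem~\ref{3.1} applies verbatim.

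For (1), the first and decisive step is to prove that $\overline{Hx}$ is compact. Since $X$ is LC Hausdorff, I would choose a compact neighborhood $U\in\mathfrak{N}_x(X)$. As $x$ is a.p. for $H\curvearrowright X$, the return set $N_H(x,U)$ is syndetic in $H$ (see \ref{1.3}), so there is $K\in2^H$ with $H=K^{-1}N_H(x,U)$. Writing any $h\in H$ as $h=k^{-1}n$ with $k\in K$ and $nx\in U$ gives $hx=k^{-1}(nx)\in K^{-1}U$, hence $Hx\subseteq K^{-1}U$. Because $T$ is a topological group, $K^{-1}$ is compact, so $K^{-1}U$ is the image of the compact set $K^{-1}\times U$ under the continuous phase map and is therefore compact. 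Thus $\overline{Hx}$ is a closed subset of a compact set, i.e.\ compact, and $x$ is a.p.\ for $H$ acting on the compact space $\overline{Hx}$ (so $\overline{Hx}$ is $H$-minimal by~\ref{1.4}).

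Next I would upgrade this to compactness of $\overline{Tx}$. Since $\overline{Hx}$ is $H$-invariant, closed, and $H$ is co-compact, Lemma~\ref{1.12} shows that $T\overline{Hx}$ is a closed $T$-invariant set and that $tH\mapsto t\overline{Hx}$ is a well-defined (by $H$-invariance) continuous map $T/H\to2^X$. As $T/H$ is compact, its image is a compact family $\mathcal{C}\subseteq2^X$, and the union of a compact family of compacta is compact; therefore $T\overline{Hx}=\bigcup\mathcal{C}$ is compact. Since $\overline{Tx}\subseteq T\overline{Hx}$, it too is compact. Finally I restrict to the compact flow $T\curvearrowright\overline{Tx}$: because $x$ is a.p.\ for $H$ on $\overline{Tx}$ and $H$ is co-compact, Theorem~\ref{3.1}(2) gives that $x$ is a.p.\ for $T$, i.e.\ $\overline{Tx}$ is $T$-minimal by~\ref{1.4}. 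This proves (1). For (2), the implication ``a.p.\ for $H$ $\Rightarrow$ a.p.\ for $T$'' is then immediate: an $H\in\mathcal{S}^\textsl{sncc}(T)$ is in particular co-compact, so (1) makes $\overline{Tx}$ compact minimal under $T$, whence $x$ is a.p.\ for $T$. Conversely, if $x$ is a.p.\ for $T$, the compact-neighborhood/syndeticity argument of the first step (applied to $T$ in place of $H$) shows directly that $\overline{Tx}\subseteq K^{-1}U$ is compact; restricting to the compact flow $T\curvearrowright\overline{Tx}$ and invoking Theorem~\ref{3.1}(1)---the inheritance by an \textsl{sncc}-subgroup on a compact space---gives that $x$ is a.p.\ for $H$. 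Here the full subnormal structure enters only through this appeal to Theorem~\ref{3.1}(1).

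The main obstacle is the propagation of compactness in~(1): extracting compactness of $\overline{Hx}$ from local compactness plus syndeticity is routine, but passing from $\overline{Hx}$ to $\overline{Tx}$ requires Lemma~\ref{1.12} together with the observation that the $T/H$-parametrized family $\{t\overline{Hx}\}$ is a compact subset of $2^X$ whose union is compact. Once $\overline{Tx}$ is known to be compact, Theorem~\ref{3.1} does all the remaining work and no further difficulty arises.
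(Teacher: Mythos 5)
Your proof is correct, and it shares the paper's overall skeleton---first make $\overline{Hx}$ compact using local compactness, then make $\overline{Tx}$ compact, then hand everything to Theorem~\ref{3.1} applied to the compact flow $T\curvearrowright\overline{Tx}$---but you carry out the decisive middle step by a genuinely different argument. The paper proves compactness of $\overline{Tx}$ with bare hands: its Step~1 shows every net in $Tx$ admits a convergent subnet (compactness of $T/H$ plus Lemma~\ref{1.8}c plus compactness of $\overline{Hx}$), and its Step~2 then runs a diagonal subnet argument, using an auxiliary uniformity on the noncompact space $X$, to pass from nets in $Tx$ to nets in $\overline{Tx}$. You instead sandwich $\overline{Tx}$ inside $T\overline{Hx}$ and prove the latter compact structurally: by Lemma~\ref{1.12} the assignment $tH\mapsto t\overline{Hx}$ is a well-defined continuous map from the compact space $T/H$ into $2^X$ (well-defined by $H$-invariance of $\overline{Hx}$, and landing in $2^X$ precisely because $\overline{Hx}$ is already known to be compact), so its image is a compact subfamily of $2^X$, and the union of a compact family of compacta is compact. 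Your route avoids choosing a uniformity and the associated subnet bookkeeping, reuses Lemma~\ref{1.12} (which the paper proves but never invokes in its own proof of Proposition~\ref{3.3}), and yields the slightly stronger conclusion that the whole saturation $T\overline{Hx}$ is compact; its extra inputs are the standard hyperspace fact about unions of compact families (worth a line of proof or a reference) and the reading of the convergence clause of Lemma~\ref{1.12} as Vietoris continuity, which is legitimate here since $\overline{Hx}$ is compact. Part (2) is treated essentially identically in both proofs: one direction follows from (1), the other by running your first-step compactness argument for $T$ itself and then quoting Theorem~\ref{3.1}-(1); this is exactly what the paper compresses into ``Obvious by (1) and Theorem~\ref{3.1}.''
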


\begin{proof}
\item (1): Let $x\in X$ be a.p. for $H\curvearrowright X$. Since $X$ is LC, hence by a standard argument it follows that $\overline{Hx}$ is a compact set in $X$.
\item \textit{Step~1.} If $\{t_nx\}$ is a net in $Tx$, then there exists a subnet $\{s_i\}$ of $\{t_n\}$ such that $s_ix\to y$ for some $y\in X$.
Indeed, we can select a subnet $\{s_i\}$ of $\{t_n\}$ and some $k\in T$ such that $s_iH\to kH$ in $T/H$, for $T/H$ is compact. Further by Lemma~\ref{1.8}c, we can select a net $\{k_i\}$ in $T$ such that $k_i\to k$ and $k_iH=s_iH$. Then
$Hx\ni k_i^{-1}s_ix\to y^\prime\in \overline{Hx}$ and $s_ix\to y:=ky^\prime\in X$.

\item \textit{Step~2.} To prove that $\overline{Tx}$ is compact, it suffices to show that there exists a convergent subnet from every net $\{y_i\colon i\in I\}$ in $\overline{Tx}$. Indeed, given $\{y_i\colon i\in I\}$ in $\overline{Tx}$, let $\mathcal {U}$ be a uniformity on $X$ (any will do and there are usually many). Let
    $I^\prime=I\times\mathcal {U}$. For $i^\prime=(i,\alpha)\in I^\prime$ let $t_{i^\prime}\in I$ such that $(y_i,t_{i^\prime}x)\in\alpha$. By Step~1, there is a subnet $\{t_{i^{\prime\prime}}\}$ of $\{t_{i^\prime}\}$ such that $t_{i^{\prime\prime}}x\to y$ for some $y\in\overline{Tx}$. For any $\varepsilon\in\mathcal {U}$ choose $\alpha\in\mathcal {U}$ with $\alpha\circ\alpha\subseteq\varepsilon$. Eventually $t_{i^{\prime\prime}}x\in\alpha[y]$ and eventually $(y_{i^{\prime\prime}}, t_{i^{\prime\prime}}x)\in\alpha$. Thus, $y_{i^{\prime\prime}}\to y$ and $\overline{Tx}$ is compact.
\item \textit{Step~3.} By Theorem~\ref{3.1}, it follows that $x$ is a.p. and so $\overline{Tx}$ is compact minimal for $T$. This proves (1).

\item (2): Obvious by (1) and Theorem~\ref{3.1}. The proof is completed.
\end{proof}

\begin{que}\label{3.4}
{\it If $\mathscr{X}$ is a minimal flow and $H\in\mathcal{S}^\textsl{sncc}(T)$ not normal in $T$, is $\mathscr{O}_H\colon X\rightarrow2^X$ a continuous map?} The general answer is NO!
\end{que}

\begin{3.4A}
Let $G$ and $H$ be as in Example~\ref{1.11}. So $H\in\mathcal {S}^\textsl{sncc}(G)$ and $H\notin\mathcal {S}^\textsl{ncc}(G)$. Let $X=G/H$. Recall that $G\curvearrowright X$ is a minimal flow. Then by Theorem~\ref{3.1}, $H\curvearrowright X$ is pointwise a.p. Given $g\in G$, write $[g]=gH\in X$. Clearly, if $g\in K$, then $\overline{H[g]}=\{[g]\}$ because $Hg=gH$. As for all $z\in\mathbb{R}$ and $i,j,k, i^\prime,j^\prime,k^\prime\in\mathbb{Z}$
$$
[i,j,k]\cdot[0,0,z]\cdot[i^\prime,j^\prime,k^\prime]=\begin{pmatrix}1&i^\prime+i&j^\prime+k^\prime i+iz+j\\0&1&k^\prime+z+k\\0&0&1\end{pmatrix},
$$
it follows that if we choose a sequence $g_n=[0,0,1/2n]\in G$ with $n\in\mathbb{N}$, then $g_n\to[0,0,0]$ the unit matrix and $\overline{H[g_n]}\not\to\overline{H[e]}=\{[e]\}$ in $2^X$ as $n\to\infty$. Thus, $\mathscr{O}_H\colon X\rightarrow2^X$ is not a continuous map and then by Theorem~\ref{3.1}, $H\curvearrowright X$ is pointwise a.p. non-minimal.
\end{3.4A}

\begin{3.4B}[{due to L.~Auslander, F.~Hahn and L.~Markus 1963}]
Let $G$, $H$ be as in Example~\ref{1.11}. Then $G\curvearrowright G/H$ is a non-equicontinuous distal coset flow.
\end{3.4B}

\begin{proof}
First $G\curvearrowright G/H$ is not equicontinuous; otherwise, $\mathscr{O}_H\colon X\rightarrow2^X$ would be continuous, contrary to Example~\ref{3.4}A. As
$H\lhd K\lhd G$, it follows that $G/H\xrightarrow{\pi_1}G/K\xrightarrow{\pi_2}\{pt\}$ are group extensions.\footnote{In fact, $\pi_2$ is a group extension of $\{pt\}$ for $K\lhd G$ and $G/K$ is a compact Hausdorff topological group. On the other hand, $K/H$ is a compact Hausdorff topological group such that $G\curvearrowright G/H\curvearrowleft K/H$, given by $(t,gH, kH)\mapsto tgkH$ for all $t\in G$, $gH\in G/H$ and $kH\in K/H$, is a bi-flow. Now for $gH\in G/H$, $$\pi_1^{-1}(gH)=gKH=\textrm{Aut}_{\pi_1}(G\curvearrowright G/H)(gH),\quad \textrm{Aut}_{\pi_1}(G\curvearrowright G/H)\cong K/H\cong\mathbb{R}/\mathbb{Z}.$$
Thus, $\pi_1$ is a group extension.} So $G\curvearrowright G/H$ is a distal coset flow. \end{proof}

\begin{se}[Weakly a.p. in the sense of W.\,H. Gottschalk]\label{3.5}
	A flow $\mathscr{X}$ is called {\it weakly a.p.} \cite{G46}, if given $\alpha\in\mathscr{U}$ there exists a compact set $K$ in $T$ such that $Tx\subseteq K^{-1}\alpha[x]$ for all $x\in X$.
	
\begin{3.5A}[{cf.~\cite[Thm.~5]{G46} or \cite[Thm.~4.24]{GH}}]
A flow $\mathscr{X}$ is weakly a.p. iff $\mathscr{O}_T\colon X\rightarrow 2^X$ is continuous.
\end{3.5A}

\begin{3.5B}
A minimal flow is weakly almost periodic.
\end{3.5B}

\begin{3.5C}
Let $\mathscr{X}$ be a minimal flow and $H\in\mathcal {S}^\textsl{ncc}(T)$. Then $H\curvearrowright X$, as a subflow of $\mathscr{X}$, is weakly a.p. of Gottschalk.
\end{3.5C}

\begin{proof}
By Theorem~\ref{3.1}-(3) and Theorem~\ref{3.5}A.
\end{proof}
\end{se}

Thus, to prove that $\mathscr{O}_H\colon X\rightarrow2^X$ continuous in Question~\ref{3.4}, it is enough to show that $H\curvearrowright X$ is weakly a.p. in the sense of Gottschalk.

\begin{cor}\label{3.6}
	Let $\mathscr{X}$ be a minimal flow and $H\in\mathcal{S}^\textsl{sncc}(T)$ with a subnormal sequence
	$H_0=H\lhd H_1\lhd\dotsm\lhd H_n=T$ such that $[H_{i+1}: H_i]$ is finite for $1\le i<n$. Then $\mathscr{O}_H\colon X\rightarrow2^X$ is continuous.
\end{cor}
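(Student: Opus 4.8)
The plan is to reduce everything to the normal co-compact case already settled in Theorem~\ref{3.1}-(3), using the finite indices above $H_1$ only to control the global topology of the $H_1$-orbit-closure decomposition. Write the subnormal sequence as $H_0=H\lhd H_1\lhd\dotsm\lhd H_n=T$. First I would record the preliminaries. Since $H$ is co-compact in $T$, Lemma~\ref{1.7}(b) gives that each $H_i$ is co-compact in $T$ and that $H$ is co-compact in $H_1$; as each $H_i$ is also subnormal and closed, every $H_i$ lies in $\mathcal{S}^\textsl{sncc}(T)$, and $H\in\mathcal{S}^\textsl{ncc}(H_1)$. Because $\mathscr{X}$ is minimal, every point of $X$ is a.p. for $T$, whence by Theorem~\ref{3.1}-(1) every point is a.p. for each $H_i$; in particular $\overline{H_1x}$ is an $H_1$-minimal set for every $x$, and the $H_1$-minimal sets partition $X$.

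The key geometric step is to show that there are only finitely many $H_1$-minimal sets, so that each of them is clopen. I would prove this by descending the chain. The space $X=\overline{Tx}$ is a single $H_n$-minimal set. If $M'$ is any $H_{i+1}$-minimal set with $1\le i<n$, then since $H_i\lhd H_{i+1}$ the group $H_{i+1}$ permutes the $H_i$-minimal subsets of $M'$ via $h\cdot\overline{H_iy}=\overline{H_ihy}$; the stabilizer of each such set contains $H_i$, so every orbit is finite of size at most $[H_{i+1}:H_i]$ and hence has closed union, and minimality of $H_{i+1}\curvearrowright M'$ forces a single orbit. Thus $M'$ contains at most $[H_{i+1}:H_i]$ many $H_i$-minimal sets. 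Iterating from $H_n$ down to $H_1$ bounds the number of $H_1$-minimal sets by $\prod_{i=1}^{n-1}[H_{i+1}:H_i]=[T:H_1]<\infty$. A finite partition of $X$ into closed sets consists of clopen sets, so each $H_1$-minimal set is open.

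Finally I would assemble continuity. Fix $x$ and put $M=\overline{H_1x}$; the flow $H_1\curvearrowright M$ is minimal, and $H\in\mathcal{S}^\textsl{ncc}(H_1)$, so Theorem~\ref{3.1}-(3) applied to this flow makes $y\mapsto\overline{Hy}$ continuous on $M$ (with values in $2^M\subseteq 2^X$, as $M$ is $H$-invariant and closed). Now if $x_\nu\to x$ in $X$, then since $M$ is an open neighborhood of $x$ we have $x_\nu\in M$ eventually, and continuity on $M$ yields $\overline{Hx_\nu}\to\overline{Hx}$ in $2^X$. Hence $\mathscr{O}_H\colon X\to 2^X$ is continuous.

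The main obstacle I expect is precisely the finiteness/clopen step: Theorem~\ref{3.1}-(3) only delivers continuity of $\mathscr{O}_H$ inside a single $H_1$-minimal set, and since $H_1$ need not be normal in $T$ the group $T$ does not permute these sets, so there is no $T$-symmetry to exploit globally. The finite indices above $H_1$ are exactly what bounds the number of $H_1$-minimal sets and thereby makes each of them open, which is what allows the local continuity to be glued into global continuity.
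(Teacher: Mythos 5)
Your proof is correct and takes essentially the same route as the paper's: an induction down the subnormal chain showing that the $H_1$-minimal sets form a finite, hence clopen, partition of $X$, followed by Theorem~\ref{3.1}-(3) applied to $H\in\mathcal{S}^\textsl{ncc}(H_1)$ on each piece $\overline{H_1x}$, with clopenness gluing the local continuity into global continuity. The only difference is that you spell out the permutation/counting argument (orbits of $H_{i+1}$ on the $H_i$-minimal sets having size at most $[H_{i+1}:H_i]$) that the paper compresses into the phrase ``by induction and Theorem~\ref{3.1}.''
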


\begin{proof}
By induction and Theorem~\ref{3.1}, it follows that $\{\overline{H_1x}\,|\,x\in X\}$ is an $H$-invariant finite partition of $X$. By Theorem~\ref{3.1} again, it follows that $\mathscr{O}_H\colon \overline{H_1x}\rightarrow2^{\overline{H_1x}}\subseteq2^X$ is continuous. Thus, $\mathscr{O}_H\colon X\rightarrow2^X$ is continuous for every $\overline{H_1x}$ is clopen in $X$. The proof is completed.
\end{proof}

If $\mathscr{X}$ is a minimal flow and $H\in\mathcal {S}^\textsl{ncc}(T)$, then Theorem~\ref{3.1} tells us that $\mathcal {P}=\{\overline{Hx}\,|\,x\in X\}$ is a $T$-invariant closed partition of $X$. In fact, we shall prove that $T\curvearrowright X/\mathcal {P}$ is an a.p. factor of $\mathscr{X}$\,(see Thm.~\ref{6.5}).

\section{\bf Homomorphism of flows}\label{s4}

\begin{se}\label{4.1}
Let $\phi\colon\mathscr{X}\rightarrow\mathscr{Z}$ be a homomorphism of flows; i.e., $\phi\colon X\rightarrow Z$ is a continuous mapping such that $\phi tx=t\phi x$ for all $x\in X$ and all $t\in T$. If in addition $\phi X=Z$, then $\mathscr{X}$ is called an extension of $\mathscr{Z}$ and $\mathscr{Z}$ is called a factor of $\mathscr{X}$.
\item[\;\;\textbf{\ref{4.1}a.}] Write ${R}_\phi=\{(x_1,x_2)\,|\,x_1,x_2\in X, \phi x_1=\phi x_2\}$.

\item[\;\;\textbf{\ref{4.1}b.}] We say that $(x_1,x_2)\in{P}_{\!\phi}(\mathscr{X})$ iff $(x_1,x_2)\in{R}_\phi\cap{P}(\mathscr{X})$. Then ${P}(\mathscr{X})={P}_{\!\phi}(\mathscr{X})$ if $Z=\{pt\}$. Thus, $\mathscr{X}$ is {\it $\phi$-proximal}, or simply, {\it $\phi$ is proximal}, iff ${P}_{\!\phi}(\mathscr{X})={R}_\phi$.
\item[\;\;\textbf{\ref{4.1}c.}] We say that $\mathscr{X}$ is \textit{$\phi$-distal}, or simply $\phi$ is \textit{distal}, iff ${P}_{\!\phi}(\mathscr{X})=\Delta_X$ iff $\overline{T(x_1,x_2)}\cap\Delta_X=\emptyset$ for all $(x_1,x_2)\in{R}_\phi\setminus\Delta_X$.

\item[\;\;\textbf{\ref{4.1}d.}] Let $H\in\mathcal {S}^\textsl{sncc}(T)$ with $H=H_0\lhd H_1\lhd\dotsm\lhd H_{n-1}\lhd H_n=T$. Then
\begin{enumerate}
\item[] $T/H\rightarrow T/H_1\rightarrow\dotsm\rightarrow T/H_{n-1}\rightarrow T/H_n=\{pt\}$
\end{enumerate}
is a sequence of a.p. extension. Thus, $T\curvearrowright T/H$ is a distal coset flow, not necessarily a.p. (see Thm.~\ref{3.4}B).
\end{se}

Then the following result together with Theorem~\ref{3.1} may give us a relativization of Theorem~\ref{2.2}.

\begin{thm}[{inheritance of relativized proximality; cf.~\cite[Thm.~10.03]{GH} for $Z=\{pt\}$, $H$ syndetic}]\label{4.2}
Let $\phi\colon\mathscr{X}\rightarrow\mathscr{Z}$ be a homomorphism of flows and $H$ a co-compact closed subgroup of $T$.
Then $(x_1,x_2)\in{P}_{\!\phi}(T\curvearrowright X)$ iff $(x_1,x_2)\in{P}_{\!\phi}(H\curvearrowright X)$.
\end{thm}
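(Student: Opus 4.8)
The plan is to reduce the statement to its purely proximal content and then transport a witnessing net from $T$ down into $H$ by exploiting the compactness of $T/H$. Both $P_{\!\phi}(T\curvearrowright X)$ and $P_{\!\phi}(H\curvearrowright X)$ are by definition subsets of the \emph{same} relation $R_\phi$, which depends only on $\phi$ and not on the acting group; moreover $\phi(tx_i)=t\phi(x_i)$ shows that $R_\phi$ is $T$-invariant, and it is closed since $Z$ is Hausdorff. Hence if $(x_1,x_2)\notin R_\phi$ both sides fail and the equivalence is trivial, so I may assume throughout that $(x_1,x_2)\in R_\phi$. The claim then collapses to the assertion that, for such a pair, $\overline{T(x_1,x_2)}\cap\Delta_X\neq\emptyset$ if and only if $\overline{H(x_1,x_2)}\cap\Delta_X\neq\emptyset$.

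The backward implication is immediate: since $H\subseteq T$ we have $\overline{H(x_1,x_2)}\subseteq\overline{T(x_1,x_2)}$, so any diagonal point caught by the $H$-orbit closure is also caught by the $T$-orbit closure. The genuine content lies in the forward implication, and this is precisely where co-compactness enters.

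For the forward direction I would argue as follows. Suppose $(x_1,x_2)$ is $\phi$-proximal under $T$, so there is a net $t_i\in T$ with $t_ix_1\to w$ and $t_ix_2\to w$ for a common $w\in X$. Because $T/H$ is compact, after passing to a subnet I may assume $t_iH\to kH$ in $T/H$ for some $k\in T$. Lemma~\ref{1.8}c then supplies (along a further subnet) a net $k_i\to k$ in $T$ with $k_iH=t_iH$, whence $h_i:=k_i^{-1}t_i\in H$ for every index. Using continuity of inversion in $T$ (so $k_i^{-1}\to k^{-1}$) together with joint continuity of the phase map, I get $h_ix_1=k_i^{-1}(t_ix_1)\to k^{-1}w$ and likewise $h_ix_2\to k^{-1}w$; thus $h_i(x_1,x_2)\to(k^{-1}w,k^{-1}w)\in\Delta_X$ with $h_i\in H$, which yields $\overline{H(x_1,x_2)}\cap\Delta_X\neq\emptyset$, as required.

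I do not anticipate a serious obstacle: the only delicate point is the passage from the $T$-net $t_i$ to an $H$-net, and this is handled verbatim by the compactness of $T/H$ together with Lemma~\ref{1.8}c---exactly the net-modification device already used in the proofs of Theorem~\ref{3.1}. Notably, no normality of $H$ and no minimality of orbit closures is invoked here, because relativized proximality is tested pairwise on fibers of $\phi$; co-compactness of $H$ alone drives the argument.
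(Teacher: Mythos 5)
Your proposal is correct and follows essentially the same route as the paper's own proof: transport the witnessing net $t_i$ with $t_i(x_1,x_2)\to(w,w)$ through the compactness of $T/H$ and Lemma~\ref{1.8}c to produce $h_i=k_i^{-1}t_i\in H$ with $h_i(x_1,x_2)\to k^{-1}(w,w)\in\Delta_X$, the reverse inclusion being trivial. The preliminary reduction you make (that $R_\phi$ is the same relation for both actions, so only the proximality content is at stake) is implicit in the paper and harmless.
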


\begin{proof}
Let $(x_1,x_2)\in{P}_{\!\phi}$ under $T$, then there is a net $\{t_n\,|\,n\in D\}$ in $T$ and a point $x\in X$ such that $t_n(x_1,x_2)\to(x,x)$. Since $T/H$ is compact, we can assume (a subnet of) $t_nH\to kH$ for some element $k\in T$. Further by Lemma~\ref{1.8}c, we can assume that there is a net $k_n\in T$ with $k_n\to k$ in $T$ such that $t_nH=k_nH$ for all $n\in D$. Then $k_n^{-1}t_n\in H$ and $k_n^{-1}\to k^{-1}$. Thus $k_n^{-1}t_n(x_1,x_2)\to k^{-1}(x,x)\in\Delta_X$ and $(x_1,x_2)\in{P}_{\!\phi}$ under $H$.
On the other hand, obviously, ${P}_{\!\phi}(H\curvearrowright X)\subseteq{P}_{\!\phi}(T\curvearrowright X)$.
The proof is completed.
\end{proof}

\begin{thm}[{inheritance of relativized distality; cf.~\cite[Prop.~5.14]{E69} for $H$ syndetic, $Z=\{pt\}$}]\label{4.3}
Let $\phi\colon\mathscr{X}\rightarrow\mathscr{Z}$ be a homomorphism of flows and $H$ a co-compact closed subgroup of $T$. Then $\mathscr{X}$ is $\phi$-distal iff $H\curvearrowright X$ is $\phi$-distal.
\end{thm}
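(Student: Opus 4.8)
The plan is to reduce Theorem~\ref{4.3} entirely to the relational statement already proved as Theorem~\ref{4.2}, observing that $\phi$-distality is by Definition~\ref{4.1}c merely the condition that the relativized proximal relation reduce to the diagonal.

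First I would record two elementary observations. Since $H<T$ and $\phi\colon X\to Z$ satisfies $\phi tx=t\phi x$ for all $t\in T$, the map $\phi$ is in particular a homomorphism of the subflow $H\curvearrowright X$, so that ``$H\curvearrowright X$ is $\phi$-distal'' is a legitimate notion. Moreover the fiber relation $R_\phi=\{(x_1,x_2)\,|\,\phi x_1=\phi x_2\}$ is determined by $\phi$ alone, hence is one and the same set whether $X$ carries the $T$-action or the $H$-action. By Definition~\ref{4.1}c, then, $\mathscr{X}$ is $\phi$-distal precisely when $P_{\!\phi}(T\curvearrowright X)=\Delta_X$, and $H\curvearrowright X$ is $\phi$-distal precisely when $P_{\!\phi}(H\curvearrowright X)=\Delta_X$.

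The key step is to invoke Theorem~\ref{4.2}, which gives the set equality $P_{\!\phi}(T\curvearrowright X)=P_{\!\phi}(H\curvearrowright X)$. Once this equality is in hand the two conditions above coincide verbatim, and the stated equivalence is immediate: if $\mathscr{X}$ is $\phi$-distal then $P_{\!\phi}(H\curvearrowright X)=P_{\!\phi}(T\curvearrowright X)=\Delta_X$, so $H\curvearrowright X$ is $\phi$-distal; conversely, if $P_{\!\phi}(H\curvearrowright X)=\Delta_X$ then the same equality forces $P_{\!\phi}(T\curvearrowright X)=\Delta_X$.

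In this formulation there is no residual obstacle, since all the work has already been done in Theorem~\ref{4.2}, whose genuine content is the nontrivial inclusion $P_{\!\phi}(T\curvearrowright X)\subseteq P_{\!\phi}(H\curvearrowright X)$---that a $T$-proximality realized along a net $t_n$ can be pushed into an $H$-proximality by correcting $t_n$ within its coset $t_nH$. That correction is exactly where co-compactness of $H$ enters, through compactness of $T/H$ together with the coset-lifting of Lemma~\ref{1.8}c; the reverse inclusion is trivial because $H$-orbits lie inside $T$-orbits. Thus the only point to watch is that Theorem~\ref{4.2} indeed applies to the present $\phi$ and $H$ with no extra hypotheses, which it does.
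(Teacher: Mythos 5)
Your proposal is correct and follows exactly the paper's route: the paper's entire proof of Theorem~\ref{4.3} is the single line ``By Theorem~\ref{4.2}'', and your write-up merely makes explicit the (sound) observations that $R_\phi$ is the same set for the $T$- and $H$-actions and that $\phi$-distality is the condition $P_{\!\phi}=\Delta_X$, so the set equality $P_{\!\phi}(T\curvearrowright X)=P_{\!\phi}(H\curvearrowright X)$ from Theorem~\ref{4.2} immediately yields the equivalence.
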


\begin{proof}
By Theorem~\ref{4.2}.
\end{proof}

\begin{cor}\label{4.4}
Let $\mathscr{X}$ be a minimal proximal flow. If $H\in\mathcal {S}^\textsl{sncc}(T)$, then $H\curvearrowright X$ is minimal proximal as a subflow of $\mathscr{X}$.
\end{cor}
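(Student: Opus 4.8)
The plan is to derive the two defining properties of $H\curvearrowright X$ separately from the machinery already in place and then glue them together with an elementary uniqueness argument. First I would establish pointwise almost periodicity: since $\mathscr{X}$ is minimal, every point of $X$ is a.p.\ for $T$ by \ref{1.4}, and since $H\in\mathcal {S}^\textsl{sncc}(T)$, the final clause of Theorem~\ref{3.1} gives that every point is a.p.\ for $H\curvearrowright X$ as well. Thus each orbit closure $\overline{Hx}$ is an $H$-minimal set (again by \ref{1.4}).

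Second, I would inherit proximality via Theorem~\ref{4.2}. Because $H$ is co-compact (being subnormal co-compact closed), I may take the trivial factor $\mathscr{Z}=\{pt\}$, for which $R_\phi=X\times X$ and hence $P_\phi=P$ by \ref{4.1}b. Theorem~\ref{4.2} then yields $P(H\curvearrowright X)=P(T\curvearrowright X)$, and since $\mathscr{X}$ is proximal the right-hand side is $X\times X$; so $H\curvearrowright X$ is proximal.

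The last step combines the two. I would first record the standard fact that a proximal flow has at most one minimal set: if $M_1,M_2$ are $H$-minimal and $x_i\in M_i$, then proximality of $(x_1,x_2)$ supplies a net $h_n\in H$ with $h_nx_1,h_nx_2\to z$ for a common $z$, and the closedness and $H$-invariance of the $M_i$ force $z\in M_1\cap M_2$, whence $M_1=M_2$. Combining this with the pointwise almost periodicity from the first step, all the $H$-minimal sets $\overline{Hx}$ coincide with a single minimal set $M$; since $x\in\overline{Hx}=M$ for every $x$, necessarily $M=X$, that is $\overline{Hx}=X$ for all $x$, so $H\curvearrowright X$ is minimal.

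Since Theorems~\ref{3.1} and~\ref{4.2} already carry the analytic weight, the only point demanding any care here is the final synthesis: recognizing that a pointwise almost periodic proximal flow is automatically minimal. I expect this to be the mild obstacle, though it is really just the observation that the unique minimal set of a proximal flow must, under pointwise almost periodicity, exhaust the whole space.
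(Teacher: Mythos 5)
Your proposal is correct and follows essentially the same route as the paper: Theorem~\ref{3.1} gives pointwise almost periodicity of $H\curvearrowright X$, Theorem~\ref{4.2} (applied with $\mathscr{Z}=\{pt\}$) gives proximality, and the two together yield minimality. The only difference is that you spell out the final synthesis---that a pointwise a.p.\ proximal flow is minimal, via uniqueness of the minimal set---which the paper's proof leaves implicit as standard.
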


\begin{proof}
By Theorem~\ref{3.1}, $H\curvearrowright X$ is pointwise a.p.; and by Theorem~\ref{4.2}, $H\curvearrowright X$ is proximal. Thus, $H\curvearrowright X$ is minimal proximal.
\end{proof}

\section{\bf Point-envelopes of subgroups}\label{s5}
\setcounter{thm}{-1}
\begin{se}\label{5.0}
Let $\mathscr{X}$ be a flow, $x\in X$, and $H$ a subset of $T$, as in \cite[Def.~2.08]{GH}, the \textit{$x$-envelope of $H$} in $T$ is defined by
$E_x[H]=\{t\in T\,|\,tx\in\overline{Hx}\}$.
\end{se}

Clearly, $H\subseteq E_x[H]$ and then $\overline{Hx}\subseteq\overline{E_x[H]x}\subseteq\overline{Hx}$ from which it follows that
\begin{enumerate}[\ref{5.0}\textbf{a.}]
\item $E_x[H]=E_x[E_x[H]]\ \forall x\in X$.
\end{enumerate}
If $H\lhd T$, then $tx\in\overline{Hx}$ implies $tHx=Htx\subseteq\overline{Hx}$; thus,
\begin{enumerate}[\ref{5.0}\textbf{b.}]
\item $t\in E_x[H]$ iff $t\overline{Hx}\subseteq\overline{Hx}$.
\end{enumerate}
Next we will be concerned with the case $H\in \mathcal{S}^{\textsl{ncc}}(T)$. Clearly, $E_x[H]$ is a closed subsemigroup of $T$ for this case. However, {\it is it a subgroup of $T$?} First, using Theorem~\ref{3.1} we can easily obtain the following result.

\begin{prop}\label{5.1}
If $\mathscr{X}$ is a minimal flow with $T$ abelian and $H<T$ is co-compact,  then $E_x[H]=E_y[H]$ for all $x,y\in X$.
\end{prop}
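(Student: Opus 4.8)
The plan is to realize $E_x[H]$ as the isotropy (stabilizer) subgroup of the point $\overline{Hx}$ in a minimal factor flow, and then exploit transitivity of that factor together with the commutativity of $T$.

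Since $T$ is abelian, $H$ is automatically normal, so $H\in\mathcal{S}^\textsl{ncc}(T)$ and Theorem~\ref{3.1}-(3) applies: $\mathcal{P}=\{\overline{Hx}\mid x\in X\}$ is a $T$-invariant partition of $X$ into $H$-minimal sets, and $\mathscr{O}_H\colon X\to 2^X$ is continuous. First I would set $Y=\mathscr{O}_H(X)\subseteq 2^X$, which is compact Hausdorff (a continuous image of $X$ inside the compact Hausdorff space $2^X$) and carries the $T$-action $t\cdot\overline{Hx}=\overline{Htx}$; the map $\mathscr{O}_H\colon X\to Y$ is then a $T$-equivariant factor map, so $T\curvearrowright Y$ is minimal. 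The first routine step is to check that for each $x$,
$$E_x[H]=\{t\in T\mid t\cdot\overline{Hx}=\overline{Hx}\}=\mathrm{Stab}_T(\overline{Hx}),$$
which holds because $tx\in\overline{Hx}$ forces $\overline{Htx}\subseteq\overline{Hx}$ (the block $\overline{Hx}$ being $H$-invariant and closed), and two blocks of $\mathcal{P}$ that are nested must coincide; conversely $\overline{Htx}=\overline{Hx}$ gives $tx\in\overline{Hx}$ at once. In particular each $E_x[H]$ is a closed subgroup of $T$ containing $H$, which already answers the subgroup question raised before the statement.

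The crux is to prove that $T\curvearrowright Y$ is \emph{transitive}. The key observation is that the orbit map $\pi_x\colon T\to Y$, $t\mapsto t\cdot\overline{Hx}=\mathscr{O}_H(tx)$, is continuous (a composite of the phase map and the continuous $\mathscr{O}_H$) and is constant on left $H$-cosets, since $h\cdot\overline{Hx}=\overline{Hx}$ for $h\in H$; hence it factors through a continuous map $T/H\to Y$. Because $H$ is co-compact, $T/H$ is compact, so the orbit $T\cdot\overline{Hx}=\pi_x(T)$ is compact and therefore closed in the Hausdorff space $Y$. Combined with the minimality relation $\overline{T\cdot\overline{Hx}}=Y$, this forces $T\cdot\overline{Hx}=Y$, i.e. the action is transitive. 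This closedness-of-orbits step — where co-compactness of $H$ and the continuity of $\mathscr{O}_H$ furnished by Theorem~\ref{3.1}-(3) are both essential — is the only real content, and the step I expect to demand the most care.

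Finally I would invoke commutativity. Given $x,y\in X$, transitivity yields $s\in T$ with $s\cdot\overline{Hx}=\overline{Hy}$, whence $\mathrm{Stab}_T(\overline{Hy})=s\,\mathrm{Stab}_T(\overline{Hx})\,s^{-1}$. As $T$ is abelian this conjugate equals $\mathrm{Stab}_T(\overline{Hx})$, so by the identification of the second paragraph $E_y[H]=E_x[H]$. Since $x,y$ were arbitrary, this proves the proposition. Note that abelianness is used only at this last step, precisely to pass from ``stabilizers along one orbit are conjugate'' to ``they are equal''.
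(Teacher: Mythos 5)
Your proposal is correct, but it takes a genuinely different route from the paper. The paper's proof is a four-line net computation: given $t\in E_x[H]$, pick $s_n\in T$ with $s_nx\to y$ (minimality); then, commuting $t$ past $s_n$ by abelianness and invoking the continuity of $\mathscr{O}_H$ from Theorem~\ref{3.1}-(3), one gets $ty=\lim s_ntx\in\lim s_n\overline{Hx}=\lim\overline{Hs_nx}=\overline{Hy}$, so $t\in E_y[H]$, and symmetry finishes. You instead build the quotient flow $Y=\mathscr{O}_H(X)\cong X/H$, identify $E_x[H]$ as the stabilizer of the block $\overline{Hx}$, prove the action on $Y$ is \emph{transitive} (orbits are continuous images of the compact $T/H$, hence closed, hence all of $Y$ by density), and conclude by conjugacy of stabilizers plus commutativity. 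Each step checks out: the stabilizer identification is right because nested blocks of a partition coincide, and the factorization of the orbit map through $T/H$ is legitimate since it is constant on $H$-cosets. What your approach buys: it uses abelianness only at the very last step, so it actually proves the stronger statement that for any minimal flow and $H\in\mathcal{S}^\textsl{ncc}(T)$ the envelopes along $X$ are all \emph{conjugate}, $E_y[H]=sE_x[H]s^{-1}$; it also shows directly that $E_x[H]$ is a closed subgroup (in the minimal setting) and essentially anticipates Theorem~\ref{6.5}, namely that $X/H$ is the homogeneous space $T/E_x[H]$. What the paper's argument buys is brevity and economy: it needs nothing beyond Theorem~\ref{3.1}-(3) and a limit exchange, with no auxiliary space or transitivity claim. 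One small caveat: your parenthetical remark that the stabilizer picture ``already answers the subgroup question raised before the statement'' is valid only under the minimality hypothesis of this proposition; the paper's general answer (arbitrary flows) is Theorem~\ref{5.3} and Corollary~\ref{5.7}.
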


\begin{proof}
Let $x,y\in X$ and $H\in\mathcal {S}^{\textsl{ncc}}(T)$. Let $t\in E_x[H]$ and let $s_n\in T$ such that $s_nx\to y$. Then by Theorem~\ref{3.1}-(3),
$$
ty=\lim s_ntx\in\lim s_n\overline{Hx}=\lim\overline{Hs_nx}=\overline{Hy}.
$$
So $t\in E_y[H]$. Symmetrically, $E_y[H]\subseteq E_x[H]$. The proof is complete.
\end{proof}

\begin{prop}\label{5.2}
Let $\mathscr{X}$ be a flow and $x\in X$ an a.p. point. Let $H\in\mathcal {S}^\textsl{ncc}(T)$. Then $E_x[H]=E_y[H]$ for all $y\in\overline{Hx}$.
\end{prop}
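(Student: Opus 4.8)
The plan is to exploit the normality of $H$ to rewrite the $x$-envelope purely in terms of the orbit-closure $\overline{Hx}$, and thereby reduce the claim to the single fact that $\overline{Hx}$ is a minimal set for the induced subflow $H\curvearrowright X$. Since $H\lhd T$, item \ref{5.0}b already gives
\[
E_x[H]=\{t\in T\,|\,t\overline{Hx}\subseteq\overline{Hx}\},
\]
and, for the same reason, $E_y[H]=\{t\in T\,|\,t\overline{Hy}\subseteq\overline{Hy}\}$ for every $y\in X$. Hence it suffices to prove that $\overline{Hy}=\overline{Hx}$ whenever $y\in\overline{Hx}$, since then the two defining conditions coincide.

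First I would invoke Theorem~\ref{3.1}-(1): because $x$ is a.p. for $T\curvearrowright X$ and $H\in\mathcal{S}^\textsl{ncc}(T)$, the point $x$ is also a.p. for the subflow $H\curvearrowright X$. By the orbit-closure characterization of almost periodicity recorded in \ref{1.4}, applied to $H\curvearrowright X$, this says precisely that $\overline{Hx}$ is a minimal set of $H\curvearrowright X$. Minimality then forces $\overline{Hy}=\overline{Hx}$ for every $y\in\overline{Hx}$: indeed $Hy\subseteq\overline{Hx}$ gives $\overline{Hy}\subseteq\overline{Hx}$, and the reverse inclusion holds because each orbit is dense in a minimal set.

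Combining the two observations, for any $y\in\overline{Hx}$ one obtains
\[
E_y[H]=\{t\in T\,|\,t\overline{Hy}\subseteq\overline{Hy}\}=\{t\in T\,|\,t\overline{Hx}\subseteq\overline{Hx}\}=E_x[H],
\]
which is the desired equality. The only substantive input is Theorem~\ref{3.1}-(1), which transfers almost periodicity of $x$ from $T$ to $H$ and thus supplies the $H$-minimality of $\overline{Hx}$; everything else is the formal rewriting afforded by the normality of $H$ through \ref{5.0}b. I therefore do not expect a genuine obstacle here, the whole content being the remark that co-compactness and normality together let us read off that the orbit-closures $\overline{Hy}$ are independent of the choice of $y\in\overline{Hx}$.
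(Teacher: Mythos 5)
Your proof is correct and follows essentially the same route as the paper: invoke Theorem~\ref{3.1} to transfer almost periodicity of $x$ from $T$ to $H$, conclude $\overline{Hy}=\overline{Hx}$ for all $y\in\overline{Hx}$ from $H$-minimality, and finish with the rewriting of the envelope given by \ref{5.0}b. You have merely spelled out the minimality step that the paper leaves implicit.
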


\begin{proof}
Let $x\in X$ be a.p. for $T$. Then $x$ is a.p. for $H$ by Theorem~\ref{3.1} and so $\overline{Hx}=\overline{Hy}$ for all $y\in\overline{Hx}$.
It then follows from \ref{5.0}b that $E_x[H]=E_y[H]$ for all $y\in\overline{Hx}$. The proof is completed.
\end{proof}

Now we will prove that $E_x[H]$ is a closed subgroup of $T$. However, it is ``non-normal'' in general. The following theorem has the flavor of \cite[Cor.~2.12]{E69} that asserts that a closed subsemigroup that contains a normal syndetic subgroup is a subgroup of $T$.

\begin{thm}\label{5.3}
Let $T$ be an \textsl{w\!F}-group (cf.~Lem.~\ref{1.7}). If $S$ be a closed subsemigroup of $T$ such that for all $V\in\mathfrak{N}_e(T)$ there is a finite set $F$ in $T$ with $FVS=T$, then $S$ is a subgroup of $T$.
\end{thm}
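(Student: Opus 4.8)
The plan is to show that $S$ is closed under inversion; once $s^{-1}\in S$ for every $s\in S$, the identity $e=s\,s^{-1}$ lies in $S$ and $S=S^{-1}$, so the closed subsemigroup $S$ is a subgroup. Since $T$ is an \textsl{w\!F}-group, inversion is continuous and every left and right translation of $T$ is a homeomorphism; consequently $Ss$ is closed for each $s\in S$, and $s^{-1}\in S$ is \emph{equivalent} to $e\in Ss$ (from $ts=e$ with $t\in S$ one reads off $t=s^{-1}$). Moreover $Ss$ inherits the hypothesis verbatim: for every $V\in\mathfrak{N}_e(T)$ and the associated finite set $F$ one has $FV(Ss)=(FVS)s=Ts=T$. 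Thus the entire theorem reduces to the single assertion that the closed ``syndetic'' set $Ss$ contains $e$ for every $s\in S$.

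To place $e$ in $Ss$ I would run the idempotent argument in the spirit of \cite[Cor.~2.12]{E69}; the only genuine difficulty is that $T$ is not assumed compact, so convergent subnets are not available for free, and the finite-cover hypothesis is exactly the substitute for compactness. Fixing $s\in S$ and applying $FVS=T$ to the points $s^{-n}$ $(n\ge 1)$, one writes $s^{-n}=f_nv_n\sigma_n$ with $f_n\in F$, $v_n\in V$, $\sigma_n\in S$, whence $\sigma_n s^{n}=v_n^{-1}f_n^{-1}\in S$ lies within $V^{-1}$ of the \emph{finite} set $F^{-1}$. Because $F$ is finite, along a subnet $f_n\equiv f$ is constant; letting $V\downarrow\{e\}$ through a product index and using that right translation by $s^{-1}$ is continuous, the elements $\sigma_n s^{\,n-1}=(\sigma_n s^{n})s^{-1}\in S$ cluster at points of the form $f^{-1}s^{-1}$. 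The crux is then to show that the only admissible limiting ``direction'' is $e$ itself, i.e. that the recurrence forced by $FVS=T$ returns the semigroup precisely to the identity. Here I would mimic Ellis: pass to a compact right-topological semigroup compactification of $T$ (for instance the enveloping semigroup of the left-translation action of $T$ on its greatest ambit), in which the closure of $S$ is compact and hence contains an idempotent; since $S\subseteq T$ is cancellative, that idempotent must be the identity, which pins the limiting direction to $e$ and produces a net in $Ss$ converging to $e$.

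The main obstacle is precisely this transfer step. In a mere \textsl{w\!F}-group multiplication is only separately continuous, so one must both construct the compactification and check that translations act with enough continuity to carry the semigroup structure across it, and then verify that the covering hypothesis prevents the relevant minimal left ideal (equivalently, its idempotent) from escaping ``to infinity'' in $E\setminus T$ — this is exactly the point at which $FVS=T$ is indispensable, and it is what guarantees the idempotent is the image of $e$ rather than of some $f^{-1}\neq e$. Granting this, $e\in\overline{Ss}=Ss$, so $s^{-1}\in S$ for every $s\in S$, and therefore $S$ is a subgroup of $T$.
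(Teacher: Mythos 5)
Your reduction to inversion (showing $s^{-1}\in S$ for each $s\in S$, equivalently $e\in Ss$) is sound and is exactly how the paper's proof begins, but the main step has a genuine gap, and the proposed fix cannot close it. A first problem appears already at your clustering stage: the finite set $F$ in the hypothesis depends on $V$. For a fixed $V$ your pigeonhole over $n$ produces a constant $f\in F(V)$, but then the $v_n$ merely lie in $V$ and need not tend to $e$; to force $v\to e$ you must shrink $V$, and then $F(V)$ --- hence the value $f$ --- changes with $V$, and since $T$ is not compact the points $f^{-1}s^{-1}$ have no reason to cluster at all. The second and more serious problem is the step you yourself call the crux: showing that the limiting ``direction'' is $e$ rather than some $f^{-1}\neq e$. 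The Ellis-style patch cannot supply this: the greatest ambit / enveloping semigroup machinery presupposes a topological group, whereas in a \textsl{w\!F}-group multiplication is only separately continuous (you concede this); the ``non-escape to infinity'' of the idempotent is granted rather than proved, and it is precisely the content of the theorem; and the cancellativity argument is invalid in any case, because the idempotent lies in the compactification $E$, not in $T$ --- cancellativity of $S$ inside $T$ says nothing about elements of $E\setminus T$ (closures of closed subsemigroups in such compactifications contain idempotents quite generally, without the subsemigroup being a group).

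The paper closes this gap with an elementary device that your setup nearly reaches but misses: instead of decomposing each power $s^{-n}$ independently, it builds a \emph{chain} inside a single covering $FVS=T$ (one fixed $V$, one fixed $F$): pick $k_0\in FV$ and inductively write $k_is^{-1}=k_{i+1}s_{i+1}$ with $k_{i+1}\in FV$, $s_{i+1}\in S$. Writing $k_i=f_iv_i$ and pigeonholing $f_m=f_n=f$ with $m<n$, the chain telescopes:
\begin{equation*}
k_n^{-1}k_ms^{-1}=s_n s\, s_{n-1}\dotsm s\, s_{m+1}\in S,
\end{equation*}
and --- this is the decisive point --- $k_n^{-1}k_m=v_n^{-1}f^{-1}fv_m=v_n^{-1}v_m$, so the unknown $f$ \emph{cancels}. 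Hence $v_ms^{-1}\in v_nS\subseteq VS$ with $v_m,v_n\in V$, an estimate anchored at $e$ uniformly in $F$. Letting $V$ shrink and using that $S$ is closed together with continuity of inversion and of right translations (which separate continuity of $(s,t)\mapsto st^{-1}$ does provide), one concludes $s^{-1}\in S$. No compactification and no idempotent are needed; the two returns to the same element of $F$ do the work they were meant to do.
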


\begin{note*}
If $T$ is a topological group, then `a finite set $F$' may be relaxed as `a compact set $F$' here.
\end{note*}

\begin{proof}
Let $s\in S$. We need only prove $s^{-1}\in S$. For this, take $V\in\mathfrak{N}_e(T)$. Then there is a finite subset $F$ of $T$ such that $FVS=T$. Let $k_0\in FV$; then $k_0s^{-1}=k_1s_1$ for some $k_1\in FV$ and $s_1\in S$. Further, $k_1s^{-1}=k_2s_2$ for some $k_2\in FV$ and $s_2\in S$. By induction, there are two sequences $k_0,k_1,k_2,\dotsc$ in $FV$ and $s_1,s_2,s_3,\dotsc$ in $S$ with $k_is^{-1}=k_{i+1}s_{i+1}$ for all $i=0,1,\dotsc$. Write $k_i=f_iv_i$ with $f_i\in F$ and $v_i\in V$ for all $i$. Since $F$ is finite, we can find some $0\le m<n<\infty$ such that $f_m=f_n=f$. Then
\begin{equation*}\begin{split}
k_n^{-1}k_ms^{-1}&=k_n^{-1}k_{m+1}s_{m+1}=k_n^{-1}(k_{m+1}s^{-1})ss_{m+1}\\
&=k_n^{-1}k_{m+2}s_{m+2}ss_{m+1}\\
&=k_n^{-1}(k_{m+2}s^{-1})ss_{m+2}ss_{m+1}=\dotsm\\
&=k_n^{-1}k_ns_nss_{n-1}\dotsm ss_{m+1}\\
&=s_nss_{n-1}\dotsm ss_{m+1}.
\end{split}
\end{equation*}
Hence $k_n^{-1}k_ms^{-1}\in S$ and $v_n^{-1}v_ms^{-1}\in S$.
Thus $v_ms^{-1}\in v_nS\subseteq VS$. Taking $v_m\to e$ in $T$, this implies that $s^{-1}\in VS$  for all $V\in\mathfrak{N}_e(T)$. Next we can select two nets $v_i\to e$ in $T$ and $s_i\in S$ such that $s^{-1}=v_is_i$ and then $v_i^{-1}s^{-1}=s_i\to s^{-1}\in S$, for $S$ is closed. The proof is completed.
\end{proof}

\begin{5.3A}
Let $S$ be a closed subsemigroup of an \textsl{w\!F}-group $T$, which includes a co-compact closed subgroup of $T$. Then $S$ is a co-compact closed subgroup of $T$.
\end{5.3A}

\begin{proof}
Let $H\le S$ be a co-compact closed subgroup of $T$. Let $V\in\mathfrak{N}_e(T)$. Then there is a finite set $F$ in $T$ with $FVH\,(\subseteq FVS)=T$. Thus, $S$ is a subgroup of $T$ by Theorem~\ref{5.3}. Moreover, by Lemma~\ref{1.7}, $S$ is co-compact.
\end{proof}

\begin{5.3B}[{cf.~\cite[Lem.~2.06]{GH} or \cite[Lem.~2..8.17]{B79} for $T$ a Hausdorff topological group}]
If $S$ is a syndetic closed subsemigroup of an \textsl{w\!F}-group $T$, then $S$ is a syndetic closed subgroup of $T$.
\end{5.3B}

\begin{proof}
Let $V\in\mathfrak{N}_e(T)$. Since $S$ is syndetic, there is a set $K\in2^T$ with $KS=T$. Further, we can find a finite set $F\subseteq T$ with $FV\supseteq K$. Thus, $FVS=T$. Then $S$ is a syndetic closed subgroup of $T$ by Theorem~\ref{5.3}.
\end{proof}

\begin{cor}[{comparable with \cite[Cor.~2.11]{E69}}]\label{5.4}
Let $E$ be an \textsl{F}-group and $M$ a closed subsemigroup of $E$. Then $M$ is a subgroup of $E$.
\end{cor}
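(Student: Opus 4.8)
The plan is to reduce Corollary~\ref{5.4} to Proposition~\ref{5.3}A, whose hypothesis only asks that the closed subsemigroup contain \emph{one} co-compact closed subgroup. Since $E$ is compact, the trivial subgroup $\{e\}$ is itself co-compact and closed in $E$; hence it will suffice to prove that $e\in M$. Note first that an \textsl{F}-group is in particular a \textsl{w\!F}-group (cf.~Lem.~\ref{1.7}): inversion is continuous and multiplication is separately continuous, so $(s,t)\mapsto st^{-1}$ is separately continuous, and thus both Theorem~\ref{5.3} and Proposition~\ref{5.3}A are available for $T=E$. Being a closed subset of the compact space $E$, the semigroup $M$ is itself compact, and its multiplication, inherited from $E$, is separately continuous.

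The heart of the argument is therefore to produce an idempotent in $M$; in a group the only idempotent is $e$, so such an idempotent must equal $e$. I would obtain it by the standard Ellis--Numakura device. Using Zorn's lemma together with the compactness of $M$ (a nested chain of nonempty closed subsemigroups has nonempty intersection by the finite-intersection property, and the intersection is again a closed subsemigroup), choose a minimal element $M_0$ in the family of nonempty closed subsemigroups of $M$. Fix $u\in M_0$. Because the right translation $w\mapsto wu$ is continuous, $M_0u$ is a nonempty closed subsemigroup of $M$ contained in $M_0$, so minimality forces $M_0u=M_0$; in particular $u\in M_0u$, i.e.\ $vu=u$ for some $v\in M_0$. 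The set $\{w\in M_0\,|\,wu=u\}$ is then a nonempty closed subsemigroup contained in $M_0$, again equal to $M_0$ by minimality, whence $u$ itself satisfies $u^2=uu=u$. Thus $u$ is idempotent and $u=e\in M$.

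With $e\in M$ in hand, $\{e\}$ is a co-compact closed subgroup of $E$ contained in $M$, and Proposition~\ref{5.3}A immediately yields that $M$ is a (co-compact) closed subgroup of $E$, completing the proof. The only genuinely delicate point is the existence of the idempotent: it is exactly here that compactness and the separate (right-)continuity of multiplication are used, and it is the step that cannot be bypassed by the elementary manipulations of Theorem~\ref{5.3}, since a small closed subsemigroup $M$ need not satisfy the covering condition $FVM=E$ until one already knows $e\in M$. Everything else is formal.
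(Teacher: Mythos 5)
Your justification for the idempotent detour rests on a false premise. You claim that ``a small closed subsemigroup $M$ need not satisfy the covering condition $FVM=E$ until one already knows $e\in M$.'' In fact the covering condition of Theorem~\ref{5.3} is automatic for $S=M$, with no information about $M$ beyond nonemptiness: since $E$ is compact and left translations are homeomorphisms, every $V\in\mathfrak{N}_e(E)$ admits a finite set $F\subseteq E$ with $FV=E$; and since each $m\in M$ is an invertible element of the ambient group $E$, we have $Em=E$, whence $FVM=EM=\bigcup_{m\in M}Em=E$. So Theorem~\ref{5.3} applies to $M$ directly --- and this one line is exactly the paper's proof of Corollary~\ref{5.4}. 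The reduction to Proposition~\ref{5.3}A, and the production of an idempotent, are not needed at all.

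Worse, the detour is not merely superfluous; in the paper's setting it is invalid, and the Note immediately following Corollary~\ref{5.4} says so explicitly. An \textsl{F}-group, as defined in the footnote to Lemma~\ref{1.7}, is a compact group with separately continuous multiplication and continuous inversion, but it carries \emph{no separation axiom} --- not Hausdorff, not even $T_1$ (the paper is deliberate about this; compare Lemma~\ref{1.8}, stated for groups that ``need not be Hausdorff''). Your Ellis--Numakura argument uses separation at three points: (i) $M_0u$ is compact as a continuous image of a compact set, but ``compact $\Rightarrow$ closed'' needs Hausdorff, so minimality of $M_0$ cannot be applied to $M_0u$; (ii) $\{w\in M_0\,|\,wu=u\}$ is the preimage of the singleton $\{u\}$ under the continuous right translation, hence closed only if singletons are closed, i.e.\ only under $T_1$; (iii) your co-compact subgroup $\{e\}$ is a \emph{closed} subgroup only under $T_1$. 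This is precisely why the paper remarks that Ellis' argument --- his Cor.~2.11 is stated for compact $T_1$ one-sided topological groups and runs through his idempotent lemma --- ``is invalid for our setting.'' Your proof is essentially Ellis' and inherits exactly that defect: it establishes the corollary only under an extra Hausdorff/$T_1$ hypothesis that the statement does not grant.
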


\begin{note*}
In \cite[Cor.~2.11]{E69}, $E$ is a compact $T_1$ left(or right)-topological group. Then $M$ is a compact $T_1$ left(or right)-topological semigroup, and so that $M$ contains the unique idempotent $e$ (\cite[Lem.~2.9]{E69}). In view of this, Ellis' argument is invalid for our setting.
\end{note*}

\begin{proof}
As $E$ is compact, it follows that for every $V\in\mathfrak{N}_e(E)$, there is a finite set $F\subseteq E$ with $FV=E$, and so that $FVM=EM=E$. Thus, $M$ is a subgroup of $E$ by Theorem~\ref{5.3}.
\end{proof}

\begin{cor}[{cf.~\cite[Lem.~2.10-(1)]{GH} or \cite[Lem.~2.8.19]{B79}}]\label{5.5}
Let $\mathscr{X}$ be a flow, $x\in X$, and $A\in\mathcal{S}^\textsl{ns}(T)$. Then $E_x[A]$ is closed syndetic subgroup of $T$ with $A\subseteq E_x[A]$ and $\overline{Ax}=\overline{E_x[A]x}$.
\end{cor}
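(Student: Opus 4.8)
The plan is to handle the three assertions in order of difficulty, isolating the only substantive point—that $E_x[A]$ is a \emph{group}—and reducing it to the semigroup-to-group result of Theorem~\ref{5.3}.

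First I would observe that the inclusion $A\subseteq E_x[A]$ and the orbit-closure identity $\overline{Ax}=\overline{E_x[A]x}$ demand no new work: they are precisely the remarks recorded immediately after Definition~\ref{5.0}, specialized to $H=A$. Indeed each $a\in A$ satisfies $ax\in Ax\subseteq\overline{Ax}$, so $A\subseteq E_x[A]$ by the definition of the envelope, and the chain $\overline{Ax}\subseteq\overline{E_x[A]x}\subseteq\overline{Ax}$ then forces equality.

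Next I would check that $E_x[A]$ is a closed subsemigroup of $T$. Closedness is automatic, since $E_x[A]=\{t\in T\mid tx\in\overline{Ax}\}$ is the preimage of the closed set $\overline{Ax}$ under the continuous map $t\mapsto tx$. For the semigroup property I would use the normality of $A$ through property~\ref{5.0}b: a point $t$ lies in $E_x[A]$ iff $t\overline{Ax}\subseteq\overline{Ax}$, so if $s,t\in E_x[A]$ then $(st)\overline{Ax}=s\bigl(t\overline{Ax}\bigr)\subseteq s\overline{Ax}\subseteq\overline{Ax}$, whence $st\in E_x[A]$. Syndeticity of $E_x[A]$ then comes for free from that of $A$: choosing $K\in2^T$ with $T=K^{-1}A$, the inclusion $A\subseteq E_x[A]$ gives $T=K^{-1}A\subseteq K^{-1}E_x[A]\subseteq T$, so $T=K^{-1}E_x[A]$ and $E_x[A]$ is syndetic.

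Finally, to promote this syndetic closed subsemigroup to a subgroup I would invoke Proposition~\ref{5.3}B: since the phase group $T$ of a flow is a Hausdorff topological group, it is in particular an \textsl{w\!F}-group, and therefore every syndetic closed subsemigroup of $T$ is already a syndetic closed subgroup. This single appeal carries all the real content—there is no manifest reason why inversion $s\mapsto s^{-1}$ should preserve the semigroup $E_x[A]$, and it is exactly the pigeonhole argument underlying Theorem~\ref{5.3} that manufactures the missing inverses. Since that theorem is already available in the general \textsl{w\!F}-setting, the main obstacle is entirely concentrated in, and dissolved by, this citation; everything else in the statement is formal.
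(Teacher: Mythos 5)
Your proposal is correct and takes essentially the same route the paper intends: Corollary~\ref{5.5} is stated without proof precisely because it follows from the remarks after Definition~\ref{5.0} (giving $A\subseteq E_x[A]$, the orbit-closure identity, closedness, and the semigroup property via normality and \ref{5.0}b) combined with Proposition~\ref{5.3}B applied to the syndetic closed subsemigroup $E_x[A]$. Your write-up fills in exactly those intended steps, including the correct observation that the Hausdorff topological group $T$ is in particular an \textsl{w\!F}-group, so nothing is missing.
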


\begin{cor}[{cf.~\cite[Lem.~2.10-(3)]{GH} for $T$ an LC group}]\label{5.6}
Let $\mathscr{X}$ be any flow, $x\in X$ and $S\in\mathcal{S}^\textsl{ns}(T)$. If $U\in\mathfrak{N}_x(X)$, then there exists $K\in 2^T$ such that $Kx\subseteq U$ and  $E_x[S]=K^{-1}S$.
\end{cor}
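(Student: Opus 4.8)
The plan is to reduce the identity $E_x[S]=K^{-1}S$ to a single group-theoretic surjectivity condition, and then to manufacture the required compact $K$ from a compact ``syndetic section'' that already exists, translating it coset-by-coset into the prescribed neighborhood. Throughout write $A=E_x[S]$; by Corollary~\ref{5.5}, $A$ is a closed syndetic subgroup of $T$ with $S\subseteq A$ and $\overline{Sx}=\overline{Ax}$, and since $S\lhd T$ we may pass to the quotient homomorphism $q\colon T\to T/S$ onto the Hausdorff topological group $T/S$ (Lemma~\ref{1.8}a,b). Because $A$ is $S$-saturated with $q(A)=A/S$ a subgroup, a short computation gives $K^{-1}S=q^{-1}(q(K)^{-1})$, so the desired equality $E_x[S]=K^{-1}S$ is equivalent to the single condition $q(K)=A/S$ (which in particular forces $K\subseteq A$). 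Thus it suffices to produce a compact $K\subseteq A$ with $Kx\subseteq U$ and $KS=A$.

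First I would record the two ingredients. Since $S$ is syndetic in $T$ and $S<A$ is closed, Lemma~\ref{1.7}a shows $S$ is syndetic in $A$; hence by \ref{1.2} there is a compact set $K_1\in2^T$ with $K_1\subseteq A$ and $K_1S=A$, i.e.\ $q(K_1)=A/S$. This $K_1$ surjects correctly but need not satisfy $K_1x\subseteq U$, and the whole point is to repair it. The repair uses a \emph{correction} observation: for each $k\in A$ one has $k^{-1}\in A=E_x[S]$, so $k^{-1}x\in\overline{Sx}$ by \ref{5.0}; as $k^{-1}U$ is a neighborhood of $k^{-1}x$ it meets $Sx$, yielding $s(k)\in S$ with $ks(k)x\in U$. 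In other words, every coset of $S$ inside $A$ contains a representative sent into $U$ by $x$.

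Next I would assemble $K$ by a finite-patching argument that preserves compactness \emph{without} invoking local compactness of $T$. For each $k\in K_1$ choose $s(k)\in S$ as above; by continuity of $w\mapsto w\,s(k)x$ there is an open $W_k\ni k$ with $W_ks(k)x\subseteq U$. The sets $\{W_k\}$ cover the compact $K_1$, so finitely many $W_{k_1},\dots,W_{k_m}$ suffice. Since $K_1$ is compact Hausdorff, hence normal, the finite open cover $\{W_{k_j}\cap K_1\}$ admits a closed (thus compact) shrinking $D_1,\dots,D_m$ with $D_j\subseteq W_{k_j}$ and $\bigcup_jD_j=K_1$. Put
$$K=\bigcup_{j=1}^{m}D_j\,s(k_j).$$
Each $D_js(k_j)$ is compact (right translation is a homeomorphism of $T$), so $K\in2^T$; we have $Kx\subseteq U$ because $D_j\subseteq W_{k_j}$; and $q(K)=\bigcup_jq(D_j)=q(K_1)=A/S$ since $s(k_j)\in S=\ker q$. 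Hence $K\subseteq A$, $Kx\subseteq U$, and by the reduction $K^{-1}S=A=E_x[S]$.

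The main obstacle is precisely the compactness of $K$. In the classical locally compact setting \cite[Lem.~2.10-(3)]{GH} one simply shrinks each neighborhood to one with compact closure; here $T$ need not be locally compact, so compactness cannot be generated out of neighborhoods. The device that replaces it is to start from the already-compact syndetic set $K_1$ and only ever translate its pieces by \emph{fixed} elements $s(k_j)\in S$ (which preserve both compactness and the $q$-image), combining them through the shrinking lemma for the normal space $K_1$. Checking that this translation really lands in $U$—the correction step—rests on the defining property $k^{-1}x\in\overline{Sx}$ for $k\in E_x[S]$, which is where the hypothesis $S\in\mathcal{S}^\textsl{ns}(T)$, via Corollary~\ref{5.5}, enters.
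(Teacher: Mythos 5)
Your proposal is correct and is essentially the paper's own argument: both extract a compact transversal of $E_x[S]$ over $S$ (your $K_1$ obtained via Lemma~\ref{1.7}a is the paper's $H=L\cap E_x[S]$ with $T=LS$), correct each of its points by an element of $S$ so as to land in $U$ (using $E_x[S]\,x\subseteq\overline{Sx}$), and then patch finitely many translated compact pieces together by compactness, finally identifying the result with $K^{-1}S$ using normality of $S$. Your quotient-group bookkeeping (reducing to $q(K)=A/S$ under $q\colon T\to T/S$), your right-handed corrections $k\,s(k)$ in place of the paper's $t^{-1}s_t$, and your shrinking-lemma step in place of compact neighborhoods inside the compact set $H$ are only cosmetic variants of the paper's direct computation $E_x[S]=S\bigl({\bigcup}_i s_{t_i}^{-1}V_{t_i}\bigr)=SK^{-1}=K^{-1}S$ (and, as in the paper, you should say ``we may assume $U$ is open'' at the outset so that continuity of $w\mapsto w\,s(k)x$ can be applied at $k$).
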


\begin{proof}
We may assume $U\in\mathfrak{N}_x(X)$ is open. Let $L\in 2^T$ with $T=LS$. Set $H=L\cap E_x[S]$. Since $E_x[S]$ is a closed subgroup of $T$ and $S\subseteq E_x[S]$ by Corollary~\ref{5.5}, $H\not=\emptyset$ is a compact set with $E_x[S]=HS\,(=SH)$.

Let $t\in H$. Then $tx\in\overline{Sx}$ and $x\in\overline{t^{-1}Sx}$. So there exists an element $s_t\in S$ with $t^{-1}s_tx\in U$. Further, there exists a compact set $V_t\in\mathfrak{N}_t(H)$ such that $V_t^{-1}s_tx\subseteq U$.
Therefore we can find a finite set $\{t_1,\dotsc,t_m\}$ in $H$, which is such that $H=\bigcup_{i=1}^mV_{t_i}$ and $V_{t_i}^{-1}s_{t_i}x\subseteq U$ for $i=1,\dotsc,m$. Now we put $K=\bigcup_{i=1}^mV_{t_i}^{-1}s_{t_i}$. Then $K$ is compact in $T$ with $Kx\subseteq U$. Moreover,
$E_x[S]=S\left({\bigcup}_{i=1}^mV_{t_i}\right)=S\left({\bigcup}_{i=1}^ms_{t_i}^{-1}V_{t_i}\right)=SK^{-1}=K^{-1}S$.
The proof is completed.
\end{proof}

\begin{cor}\label{5.7}
Let $\mathscr{X}$ be any flow, $x\in X$, and $H\in\mathcal{S}^\textsl{ncc}(T)$. Then $E_x[H]$ is co-compact closed subgroup of $T$ and $T\curvearrowright T/E_x[H]$ is an a.p. minimal flow.
\end{cor}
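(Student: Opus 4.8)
The plan is to deduce both assertions from the apparatus already in place. I would first record that $S:=E_x[H]$ is a closed subsemigroup of $T$ containing $H$. The inclusion $H\subseteq E_x[H]$ is immediate from the definition in \ref{5.0}; closedness and the semigroup property are exactly the remark preceding Proposition~\ref{5.1}, and follow at once from \ref{5.0}b, since for $H\lhd T$ one has $t\in E_x[H]$ iff $t\overline{Hx}\subseteq\overline{Hx}$, a condition stable under products and (by continuity of the phase map) under limits. Now $H\in\mathcal{S}^\textsl{ncc}(T)$ is in particular a co-compact closed subgroup contained in $S$, and $T$, being a Hausdorff topological group, is an \textsl{w\!F}-group in the sense of Lemma~\ref{1.7}. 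Hence Proposition~\ref{5.3}A applies verbatim and yields that $S=E_x[H]$ is a co-compact closed subgroup of $T$. This settles the first assertion, and in particular $T/S$ is compact Hausdorff (Hausdorff by Lemma~\ref{1.8}b).

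For the second assertion, minimality of $T\curvearrowright T/S$ is automatic: the orbit of the base coset $eS$ is $\{t\,eS\mid t\in T\}=T/S$, so the coset flow is transitive, hence minimal in the sense of \ref{1.4}. The substance is to prove that $T\curvearrowright T/S$ is equicontinuous, i.e.\ a.p.\ as in \ref{1.3}. The key observation is that although $S$ need not be normal, the subgroup $H$ \emph{is} normal and satisfies $H\le S$, so $H$ acts trivially on $T/S$: for $h\in H$ and $t\in T$ one has $t^{-1}ht\in H\subseteq S$, whence $h\,(tS)=htS=tS$. Consequently the $T$-action on $T/S$ descends to an action of the compact group $G:=T/H$. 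This descended action is jointly continuous: by Lemma~\ref{1.8}a the quotient $\rho\colon T\to T/H$ is open, so $\rho\times\mathrm{id}_{T/S}$ is open and surjective, hence a quotient map, and the continuous $T$-action $T\times T/S\to T/S$ is constant on its fibres (precisely because $H$ acts trivially), therefore factors through a continuous map $G\times T/S\to T/S$. Thus the original flow is the pullback along $\rho$ of a flow $G\curvearrowright T/S$ whose phase group $G$ is compact.

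It then remains to invoke the standard fact that a flow with compact phase group is equicontinuous: the action map $G\times T/S\to T/S$ is continuous on the compact space $G\times T/S$, hence uniformly continuous, and uniform continuity in the phase-space variable uniformly over $g\in G$ is precisely equicontinuity of the family of transformations $\{\xi\mapsto g\xi\mid g\in G\}$. Since $\rho[T]=G$, this family coincides with $\{\xi\mapsto t\xi\mid t\in T\}$, so for each $\varepsilon\in\mathscr{U}$ the entourage $\delta$ produced for $G$ satisfies $T\delta=G\delta\subseteq\varepsilon$; hence $T\curvearrowright T/S$ is equicontinuous, i.e.\ a.p. Together with minimality this proves the corollary. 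The one point requiring care—and the crux of the argument—is this reduction to a compact phase group: it is the normality of $H$ (not of $S$, which may genuinely fail, as flagged in the preamble) together with $H\le S$ that forces $H$ to act trivially on $T/S$ and lets the flow factor through $T/H$; once that is secured, compactness of $T/H$ delivers equicontinuity for free.
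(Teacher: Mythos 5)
Your proof is correct, and it splits exactly as the paper's does on the first assertion but takes a different route on the second. For the group property, both you and the paper get that $E_x[H]$ is a co-compact closed subgroup containing $H$ from Theorem~\ref{5.3}; your citation of Proposition~\ref{5.3}A is precisely how the paper's appeal to Theorem~\ref{5.3} is meant to be read (since $FVH\subseteq FVE_x[H]$), so there is no real difference there, and minimality is in both cases the trivial remark that the coset action is transitive. For almost periodicity the paper argues in one line: since $H\subseteq E_x[H]$, the flow $T\curvearrowright T/E_x[H]$ is a factor of the coset flow $T\curvearrowright T/H$ via $tH\mapsto tE_x[H]$, and the latter is a.p.\ because $T/H$ is a compact Hausdorff topological group (this is where normality of $H$ enters, as in \ref{4.1}d); one then quotes the standard fact that factors of a.p.\ flows are a.p. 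You instead show that $H$ acts trivially on $T/E_x[H]$ (again using normality of $H$ together with $H\le E_x[H]$), descend the action through the open quotient $\rho\times\mathrm{id}$ to a jointly continuous action of the compact group $T/H$ on the compact Hausdorff space $T/E_x[H]$ (Hausdorff by Lemma~\ref{1.8}b), and invoke the fact that a compact group acting jointly continuously acts equicontinuously. Both arguments rest on exactly the same two ingredients---normality of $H$ and compactness of $T/H$---but yours is more self-contained: it avoids quoting ``factors of equicontinuous flows are equicontinuous,'' whose proof in the non-metric compact Hausdorff setting needs its own small compactness argument, whereas the paper's version is shorter and more modular, reusing the coset-flow machinery already set up in \ref{4.1}d. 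Either way the corollary is fully established.
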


\begin{proof}
Clearly $E_x[H]$ is a co-compact closed subgroup of $T$ such that $H\subseteq E_x[H]$ by Theorem~\ref{5.3}. Since $T/E_x[H]$ is a factor of $T/H$, hence $T\curvearrowright T/E_x[H]$ is an a.p. coset flow. The proof is completed.
\end{proof}

\section{\bf Relation of orbital closure of \textsl{ncc}-subgroups}\label{s6}
Let $\mathscr{X}$ be a flow with compact Hausdorff phase space $X$ and with phase mapping $T\times X\rightarrow X$, $(t,x)\mapsto tx$ in the sequel of this section.
\begin{se}\label{6.1}
Given a subgroup $H$ of $T$, let $H\curvearrowright X$ be the induced subflow of $\mathscr{X}$. We define
\begin{enumerate}
\item[] $R_H=\{(x,x^\prime)\,|\,x,x^\prime\in X\textrm{ s.t. }x^\prime\in \overline{Hx}\}$,
\end{enumerate}
which is called the \textit{$H$-orbital closure relation} of $\mathscr{X}$. Write
$X/H$ for $X/R_H$,
where $X/H$ is equipped with the quotient topology.
\end{se}

If $R_H$ is a closed $T$-invariant equivalence relation on $X$, then $X/H$ is a compact Hausdorff space (by Lem.~\ref{6.3} below) so that
$T\curvearrowright X/H$ is a flow. If $H\curvearrowright X$ is pointwise a.p., then $R_H$ is an equivalence relation. However, $R_H$ is generally not an invariant closed relation.

\begin{lem}\label{6.2}
If $\mathscr{X}$ is a minimal flow and $H\in\mathcal{S}^\textsl{ncc}(T)$, then $R_H$ is a closed invariant equivalence relation of $\mathscr{X}$.
\end{lem}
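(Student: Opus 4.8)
The plan is to read off all three required properties directly from Theorem~\ref{3.1}-(3), which supplies the two structural facts we need: that $\{\overline{Hx}\mid x\in X\}$ is a $T$-invariant partition of $X$, and that the orbit-closure map $\mathscr{O}_H\colon X\to 2^X$ is continuous. Everything else is bookkeeping, so the argument should be short.

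For the equivalence-relation assertion I would work from the partition structure. Reflexivity is clear since $x=ex\in Hx\subseteq\overline{Hx}$. Because the cells of a partition are pairwise either equal or disjoint, and $x'$ lies in both $\overline{Hx}$ and $\overline{Hx'}$ whenever $x'\in\overline{Hx}$, one obtains the characterization $x'\in\overline{Hx}\iff\overline{Hx}=\overline{Hx'}$. Symmetry and transitivity of $R_H$ then follow formally from this equality of cells, so $R_H$ is precisely the ``same-cell'' relation of the partition.

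For $T$-invariance the normality of $H$ is the point. Since each $t\in T$ acts as a homeomorphism of the compact Hausdorff space $X$, closure is preserved and $t\overline{Hx}=\overline{tHx}=\overline{Htx}=\overline{H(tx)}$, the middle step being $tH=Ht$. Hence $x'\in\overline{Hx}$ yields $tx'\in t\overline{Hx}=\overline{H(tx)}$, i.e. $(tx,tx')\in R_H$.

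Closedness is where continuity of $\mathscr{O}_H$ is used. The map $X\times X\to 2^X\times X$, $(x,x')\mapsto(\overline{Hx},x')$, is continuous by Theorem~\ref{3.1}-(3), and $R_H$ is exactly its preimage of the membership set $\{(A,a)\mid a\in A\}$. The latter is closed in $2^X\times X$ under the Vietoris topology for compact Hausdorff $X$: if $a\notin A$ one separates $a$ from the closed set $A$ by disjoint open sets $U\ni a$ and $W\supseteq A$, and then $\langle W\rangle$ together with $U$ gives a neighborhood of $(A,a)$ missing the membership set. Concretely, $x_n\to x$ forces $\overline{Hx_n}\to\overline{Hx}$, and combined with $x_n'\to x'$ and $x_n'\in\overline{Hx_n}$ this yields $x'\in\overline{Hx}$. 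Since the substantive work is already contained in Theorem~\ref{3.1}-(3), the only mild obstacle is recalling that membership is a closed relation under the Vietoris topology, which I would either cite or dispatch in the one-line separation argument just sketched.
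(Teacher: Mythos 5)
Your proof is correct and takes essentially the same route as the paper: both arguments simply read the lemma off Theorem~\ref{3.1} (the paper cites parts (1)--(2) for the invariant equivalence relation and (3) for closedness, while you use the partition statement of (3) plus normality, and then continuity of $\mathscr{O}_H$ for closedness). Your Vietoris-membership argument just makes explicit a step the paper leaves implicit.
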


\begin{proof}
By Theorem~\ref{3.1}-(1) and (2), it follows that $R_H$ is a $T$-invariant equivalence relation on $X$. Further by Theorem~\ref{3.1}-(3), $R_H$ is closed in $X\times X$. The proof is completed.
\end{proof}

It turns out that, for $H\in\mathcal{S}^\textsl{ncc}(T)$, $X/R_H$ is a compact Hausdorff space by the following folklore lemma in general topology, where the ``compact Hausdorff'' of $X$ plays a role in its proof. We shall give a proof here for reader's convenience, which is comparable with Lemma~\ref{1.8}b.

\begin{lem}[{cf.~\cite[Lem.~4.9]{E69} without proof}]\label{6.3}
Let $R$ be an equivalence relation on $X$. Then $R$ is closed in $X\times X$ iff $X/R$ is a Hausdorff space.
\end{lem}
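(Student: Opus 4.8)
The plan is to reason with the quotient map $\pi\colon X\to X/R$ and its square $\pi\times\pi\colon X\times X\to(X/R)\times(X/R)$, in the spirit of the proof of Lemma~\ref{1.8}b. The organizing identity is
\[
R=(\pi\times\pi)^{-1}[\Delta_{X/R}],
\]
valid because $(x,x^\prime)\in R$ iff $\pi x=\pi x^\prime$ iff $(\pi x,\pi x^\prime)\in\Delta_{X/R}$. The necessity is then immediate and uses no compactness: if $X/R$ is Hausdorff then $\Delta_{X/R}$ is closed (its diagonal being closed, as recalled in the footnote to Lemma~\ref{1.8}), and since $\pi\times\pi$ is continuous its preimage $R$ is closed in $X\times X$.

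For the sufficiency I would use the compactness of $X$, which is precisely where this direction departs from Lemma~\ref{1.8}b: there the openness of $\rho$ was available, here I replace it by \emph{closedness} of $\pi$. First I would prove that $R$ closed forces $\pi$ to be a closed map. For $C\subseteq X$ closed, hence compact, one has
\[
\pi^{-1}[\pi[C]]=\mathrm{pr}_1\big[(X\times C)\cap R\big],
\]
and the right-hand side is the continuous image of the compact set $(X\times C)\cap R$, so it is compact and therefore closed in the Hausdorff space $X$; by the definition of the quotient topology $\pi[C]$ is closed. (Note also that each $R$-class, being a section of the closed set $R$, is closed in $X$.)

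With $\pi$ closed in hand I would separate two distinct points $\pi x\neq\pi y$ of $X/R$ as follows. Their fibres $A=\pi^{-1}(\pi x)$ and $B=\pi^{-1}(\pi y)$ are disjoint closed subsets of $X$, so by normality of the compact Hausdorff space $X$ there are disjoint open sets $U\supseteq A$ and $V\supseteq B$. These are generally not $R$-saturated, and turning them into saturated sets is the crux. Using closedness of $\pi$, I would pass to
\[
\tilde U=\pi^{-1}\big[(X/R)\setminus\pi[X\setminus U]\big],
\]
an $R$-saturated open set satisfying $A\subseteq\tilde U\subseteq U$, and likewise $\tilde V$ with $B\subseteq\tilde V\subseteq V$; then $\tilde U\cap\tilde V=\emptyset$. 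Being saturated and open, their images $\pi[\tilde U]$ and $\pi[\tilde V]$ are disjoint open neighborhoods of $\pi x$ and $\pi y$, so $X/R$ is Hausdorff. The two steps that bear the weight of the argument---and in which compactness of $X$ is indispensable---are the proof that $\pi$ is closed and the saturation passage from $U,V$ to $\tilde U,\tilde V$; I expect the latter to be the main subtlety, since it is exactly the point where the quotient topology must be invoked to convert a topological separation in $X$ into one in $X/R$.
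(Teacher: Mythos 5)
Your proof is correct and follows essentially the same route as the paper's: both directions match, with the necessity via $R=(\pi\times\pi)^{-1}[\Delta_{X/R}]$, and the sufficiency by first showing $\pi$ is closed (projecting the compact set $(X\times C)\cap R$) and then saturating the disjoint open sets furnished by normality of $X$. Indeed your saturated set $\tilde U=\pi^{-1}\bigl[(X/R)\setminus\pi[X\setminus U]\bigr]$ coincides exactly with the paper's $R^*(U)=X\setminus\rho^{-1}\rho[X\setminus U]$, so the two arguments are the same proof in slightly different notation.
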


\begin{proof}
Let $\rho\colon X\rightarrow X/R$ be the quotient map.
If $X/R$ is Hausdorff, then $\Delta_{X/R}$ is closed in $X/R\times X/R$ so that $R=(\rho\times\rho)^{-1}[\Delta_{X/R}]$ is closed in $X\times X$. (This still holds for $X$ a topological space not necessarily compact and Hausdorff.)

Conversely, suppose $R$ is closed in $X\times X$.
Let $A\not=\emptyset$ be a closed subset of $X$ and set $R[A]=\bigcup_{a\in A}R(a)\subseteq X$. Clearly, $R[A]=\rho^{-1}\rho[A]$ is closed in $X$, for $R[A]$ is exactly equal to the image of $A\times X\cap R$ under the canonical map $(x_1,x_2)\mapsto x_2$.
So $\rho$ is closed. If $U\subseteq X$ is open, then 
\begin{enumerate}
\item[] $R^*(U):=\{x\in X\,|\,R(x)\subseteq U\}=X\setminus R[X\setminus U]=X\setminus\rho^{-1}\rho[X\setminus U]$
\end{enumerate}
is open. So $\rho[R^*(U)]$ is open in $X\setminus R$. Now if $\rho(x)\not=\rho(y)$ (or equivalently, $R(x)\cap R(y)=\emptyset$), then we can choose disjoint open sets $U_x$, $U_y$ with $R(x)\subseteq U_x$ and $R(y)\subseteq U_y$. Thus, $\rho[R^*(U_x)]\in\mathfrak{N}_{\rho(x)}(X/R)$ and $\rho[R^*(U_y)]\in\mathfrak{N}_{\rho(y)}(X/R)$ are disjoint, and, $X/R$ is a Hausdorff space. The proof is completed.
\end{proof}

\begin{rem}\label{6.4}
For the necessity proof above, ``compact Hausdorff'' of $X$ has played a role in proving that $R[A]$ is a closed subset of $X$.

If $R$ is closed in $X\times X$, then the cells $R(x)$, for $x\in X$, are closed sets in $X$. However, the converse is false (so that this negates \cite[A.2.3]{De}). Let's see a counterexample.
	
\begin{example}	
Let $X=[0,1]\times[0,1]=\{x=(x_1,x_2)\,|\,0\le x_1,x_2\le 1\}$. Define a relation $R$ on $X$ by the way: for $x=(x_1,x_2)$, $y=(y_1,y_2)\in X$,
\begin{enumerate}
\item[] $x\sim y$ iff $x_1=y_1$ if $0\le x_1,y_1<1$; $x\sim y$ iff $x=y$ if $x_1=y_1=1$.
\end{enumerate}
Clearly, $R$ is an equivalence relation on $X$ such that $R(x)$, for $x\in X$, is a closed set in $X$. However, $R$ is not closed in $X\times X$. Thus, $X/R$ is compact and not of Hausdorff.
\end{example}
\end{rem}

\begin{thm}\label{6.5}
Let $\mathscr{X}$ be a minimal flow and $H\in\mathcal{S}^\textsl{ncc}(T)$. Then for all $x\in X$, the evaluation mapping
\begin{enumerate}
\item[] $\mathfrak{e}\colon T/E_x[H]\rightarrow X/H,\quad tE_x[H]\mapsto t\overline{Hx}$
\end{enumerate}
is an isomorphism from $T\curvearrowright T/E_x[H]$ onto $T\curvearrowright X/H$, and moreover, $T\curvearrowright X/H$ is an a.p. factor of $\mathscr{X}$.
\end{thm}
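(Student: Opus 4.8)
The plan is to first establish that the evaluation map $\mathfrak{e}$ is well-defined, bijective, and continuous, and then to deduce almost periodicity from the structure already in place. By Corollary~\ref{5.7}, $E_x[H]$ is a co-compact closed subgroup of $T$ and $T\curvearrowright T/E_x[H]$ is an a.p. minimal flow; by Lemma~\ref{6.2}, $R_H$ is a closed invariant equivalence relation, so by Lemma~\ref{6.3} the quotient $X/H$ is a compact Hausdorff space on which $T$ acts. Thus both sides are genuine flows and it remains to match them up.

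\emph{Well-definedness and injectivity.} First I would check that $\mathfrak{e}$ is well-defined: if $tE_x[H]=sE_x[H]$, then $s^{-1}t\in E_x[H]$, which by \ref{5.0}b (valid since $H\lhd T$) means $s^{-1}t\,\overline{Hx}\subseteq\overline{Hx}$; applying the inverse $t^{-1}s$ gives the reverse inclusion, so $t\overline{Hx}=s\overline{Hx}$, i.e. the image $t\overline{Hx}$ in $X/H$ depends only on the coset. For injectivity, suppose $t\overline{Hx}=s\overline{Hx}$; running the same computation backwards shows $s^{-1}t\,\overline{Hx}\subseteq\overline{Hx}$, hence $s^{-1}t\in E_x[H]$ again by \ref{5.0}b, so the two cosets coincide. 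Surjectivity is immediate from minimality of $\mathscr{X}$: every point of $X$ has the form $tx$ for a net of $t$'s, and since $\mathscr{O}_H$ is continuous (Theorem~\ref{3.1}-(3)) the cells $t\overline{Hx}=\overline{Htx}$ exhaust all of $X/H$.

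\emph{Continuity, equivariance, and the conclusion.} Equivariance is clear since $\mathfrak{e}(s\cdot tE_x[H])=st\overline{Hx}=s\cdot\mathfrak{e}(tE_x[H])$. For continuity of $\mathfrak{e}$, I would use Lemma~\ref{1.8}c: if $t_iE_x[H]\to tE_x[H]$ in $T/E_x[H]$, pass to a subnet and lift to $k_i\to k$ in $T$ with $k_iE_x[H]=t_iE_x[H]$; then continuity of $\mathscr{O}_H$ gives $k_i\overline{Hx}=\overline{Hk_ix}\to\overline{Hkx}=k\overline{Hx}$, which projects to the desired convergence in $X/H$. Since $\mathfrak{e}$ is a continuous bijection from the compact space $T/E_x[H]$ to the Hausdorff space $X/H$, it is a homeomorphism, hence an isomorphism of flows. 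Finally, because $T\curvearrowright T/E_x[H]$ is a.p. (Corollary~\ref{5.7}) and a.p.\ is a flow-isomorphism invariant, $T\curvearrowright X/H$ is a.p.; that it is a factor of $\mathscr{X}$ is witnessed by the quotient map $X\to X/H$, which is continuous and $T$-equivariant.

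\emph{Main obstacle.} The delicate point is the passage between the algebraic condition $s^{-1}t\in E_x[H]$ and the geometric condition $t\overline{Hx}=s\overline{Hx}$ needed for both well-definedness and injectivity; this hinges on characterization \ref{5.0}b, which requires $H\lhd T$ (available since $H\in\mathcal{S}^\textsl{ncc}(T)$) so that $E_x[H]$ consists exactly of those $t$ fixing the cell $\overline{Hx}$ setwise. The other genuinely nontrivial ingredient is continuity of $\mathfrak{e}$, where I rely essentially on the continuity of $\mathscr{O}_H$ from Theorem~\ref{3.1}-(3); without that, the net argument would only give lower semi-continuity and the map need not be continuous.
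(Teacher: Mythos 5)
Your proof is correct, and its skeleton is the paper's own: Corollary~\ref{5.7} for the a.p.\ minimal coset flow $T\curvearrowright T/E_x[H]$, Lemmas~\ref{6.2} and \ref{6.3} for $X/H$ compact Hausdorff, Theorem~\ref{3.1} for the invariant partition into $H$-minimal sets, and then the verification that $\mathfrak{e}$ is a well-defined, equivariant, continuous bijection from a compact space onto a Hausdorff space. Where you genuinely diverge is the continuity step. The paper notes that $f\colon T\to X/H$, $t\mapsto t\overline{Hx}$, is continuous for elementary reasons (it is the orbit map $t\mapsto tx$ followed by the quotient projection $X\to X/H$) and is constant on cosets of $E_x[H]$; since $\rho\colon T\to T/E_x[H]$ is an open quotient map (Lemma~\ref{1.8}a), the factorization $f=\mathfrak{e}\circ\rho$ forces $\mathfrak{e}$ to be continuous, with no mention of $2^X$ at all. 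You instead lift nets via Lemma~\ref{1.8}c and use continuity of $\mathscr{O}_H$ (Theorem~\ref{3.1}-(3)) in the Vietoris topology. This works, but it carries two seams the paper's route avoids: you must convert ``every net converging to $tE_x[H]$ has a subnet whose image converges to $\mathfrak{e}(tE_x[H])$'' into genuine continuity (standard, but it should be said), and you must justify that Vietoris convergence $k_i\overline{Hx}\to k\overline{Hx}$ in $2^X$ yields convergence of the corresponding points of $X/H$ (true, by the upper-Vietoris half: any saturated open $W\supseteq k\overline{Hx}$ eventually contains $k_i\overline{Hx}$, so the images enter $W$). Also, your surjectivity sentence is looser than it reads: minimality only provides a net $t_nx\to y$, and producing an actual $k\in T$ with $\overline{Hy}=k\overline{Hx}$ needs compactness of $T/E_x[H]$ and Lemma~\ref{1.8}c once more---or, more cheaply, the observation that the image of the continuous equivariant map $\mathfrak{e}$ is a nonempty closed invariant subset of the minimal flow $T\curvearrowright X/H$, hence all of it. In exchange, your write-up makes explicit the well-definedness and injectivity bookkeeping via \ref{5.0}b together with the group property of $E_x[H]$ from Corollary~\ref{5.7}, which the paper compresses into ``obviously''; that is a worthwhile addition, since injectivity is exactly where the definition of $E_x[H]$ earns its keep.
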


\begin{note*}
Consequently, $T\curvearrowright X/H$ may be thought of as an a.p. coset flow of $T$ if $H\in\mathcal{S}^\textsl{ncc}(T)$. On the other hand, if $H\in\mathcal{S}^\textsl{sncc}(T)\setminus\mathcal{S}^\textsl{ncc}(T)$, then the statement is generally false. For example, let $H\lhd K\lhd G$ be as in Example~\ref{1.11}; let $X=G/H$ and $x=[e]=H\in X$. Then for $T=G$, $E_x[H]=H$ and by Example~\ref{3.4}A, $R_H$ is not closed so that $X/H$ is not Hausdorff by Lemma~\ref{6.3}. Since $G/E_x[H]=G/H$ is Hausdorff, hence $G/E_x[H]\not\cong X/H$.
\end{note*}

\begin{proof}
At first, since $\{\overline{Hy}\,|\,y\in X\}$ is a closed $T$-invariant partition of $X$ into $H$-minimal sets by Theorem~\ref{3.1}. So $X/H$ is a compact Hausdorff space and $T\curvearrowright X/H$ is a minimal flow by Lemmas~\ref{6.2} and \ref{6.3}. By Corollary~\ref{5.7}, $T\curvearrowright T/E_x[H]$ is an a.p. minimal flow. Obviously $\mathfrak{e}$ is a well-defined 1-1 onto map such that $\mathfrak{e}t=t\mathfrak{e}$ for all $t\in T$. It remains to show that $\mathfrak{e}$ is a continuous mapping. For this, consider the CD:
$$
\begin{diagram}
T&\rTo^{f\quad}&X/H\\
\dTo^\rho&\ruTo_{\mathfrak{e}}&\\
T/E_x[H]
\end{diagram}
\qquad \textrm{where}
\begin{cases}f\colon t\mapsto t\overline{Hx},\\\rho\colon t\mapsto tE_x[H].\end{cases}
$$
Notice that $t_n\to t$ in $T$ implies that $t_nx\to tx$; and $H\in\mathcal{S}^\textsl{ncc}(T)$ and $t_n\to t$ in $T$ implies that $f(t_n)\to f(t)$ in $X/H$. Thus, $f$ and $\rho$ both are continuous maps. Moreover, if $tE_x[H]=sE_x[H]$ (or equivalently, if $t^{-1}s\in E_x[H]$), then $f(t)=f(s)$ so that $\mathfrak{e}$ is continuous.
Thus, $T\curvearrowright X/H$ is equicontinuous. The proof is completed.
\end{proof}

\begin{cor}[{cf.~\cite[Thm.~2.25]{GH}}]\label{6.6}
Let $\mathscr{X}$ be a minimal flow, $x\in X$, and $S\in\mathcal{S}^\textsl{ns}(T)$. Then $\{\overline{Sz}\,|\,z\in X\}$ is a $T$-invariant partition of $X$; and moreover, $\{\overline{Sz}\,|\,z\in X\}$ and $T/E_x[S]$ have the same cardinality.
\end{cor}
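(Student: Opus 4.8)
The plan is to obtain Corollary~\ref{6.6} as an immediate consequence of Theorem~\ref{3.1} and Theorem~\ref{6.5}; the essential observation is that a normal syndetic closed subgroup is in particular a normal co-compact closed subgroup, so the entire \textsl{ncc}-machinery applies to $S$. First I would record the inclusion $\mathcal{S}^\textsl{ns}(T)\subseteq\mathcal{S}^\textsl{ncc}(T)$ noted after Lemma~\ref{1.7}: since $S$ is syndetic, some $K\in2^T$ is carried onto $T/S$ by $\rho$, so $T/S$ is compact and $S\in\mathcal{S}^\textsl{ncc}(T)$.

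For the partition assertion, because $\mathscr{X}$ is minimal and $S\in\mathcal{S}^\textsl{ncc}(T)$, Theorem~\ref{3.1}-(3) gives at once that $\{\overline{Sz}\,|\,z\in X\}$ is a $T$-invariant partition of $X$ into $S$-minimal sets.

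For the cardinality statement, I would first identify this partition with the underlying point set of the quotient $X/S=X/R_S$ of \ref{6.1}. Indeed, since the cells $\overline{Sz}$ partition $X$ into minimal sets, $x'\in\overline{Sx}$ forces $\overline{Sx'}=\overline{Sx}$; hence $R_S$ is a closed equivalence relation (Lemma~\ref{6.2}) whose class of $z$ is precisely $\overline{Sz}$, and the set $X/S$ is canonically in bijection with $\{\overline{Sz}\,|\,z\in X\}$. I would then apply Theorem~\ref{6.5} with $H=S$: the evaluation map $\mathfrak{e}\colon T/E_x[S]\to X/S$, $tE_x[S]\mapsto t\overline{Sx}$, is an isomorphism of flows and in particular a bijection of point sets. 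Composing it with the identification above produces a bijection between $T/E_x[S]$ and $\{\overline{Sz}\,|\,z\in X\}$, so the two have equal cardinality.

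I do not expect a genuine obstacle here: the substance has been front-loaded into Theorems~\ref{3.1} and~\ref{6.5}, and the only point needing care is the routine check that the $R_S$-classes coincide with the sets $\overline{Sz}$, which is exactly the partition-into-minimal-sets property furnished by Theorem~\ref{3.1}-(3).
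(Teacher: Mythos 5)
Your proposal is correct and is exactly the intended derivation: the paper states Corollary~\ref{6.6} without proof, treating it as immediate from the inclusion $\mathcal{S}^\textsl{ns}(T)\subseteq\mathcal{S}^\textsl{ncc}(T)$ (noted after Lemma~\ref{1.7}) together with Theorem~\ref{3.1}-(3) for the partition and the bijectivity of $\mathfrak{e}$ in Theorem~\ref{6.5} for the cardinality claim. Your extra check that the $R_S$-classes are precisely the sets $\overline{Sz}$ is the right (routine) gluing step, and nothing more is needed.
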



\begin{cor}[{cf.~\cite[Lem.~2.8.26]{B79} for $T$ abelian}]\label{6.7}
Let $\mathscr{X}$ be a minimal flow, $x\in X$, and $A\in\mathcal{S}^\textsl{ns}(T)$. Then
$\mathfrak{e}\colon (T,T/E_x[A])\rightarrow(T,X/A)$
is an isomorphism.
\end{cor}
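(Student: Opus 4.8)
The plan is to reduce this to Theorem~\ref{6.5}, which already establishes exactly the desired flow isomorphism for every $H\in\mathcal{S}^{\textsl{ncc}}(T)$. The only gap to bridge is that here $A$ is assumed merely \emph{syndetic} rather than co-compact. First I would invoke the remark following Lemma~\ref{1.7}: a syndetic closed subgroup is automatically co-compact, since a compact transversal $K$ with $KA=T$ maps onto $T/A$ under the quotient map $\rho$. Together with normality this gives the inclusion $\mathcal{S}^{\textsl{ns}}(T)\subseteq\mathcal{S}^{\textsl{ncc}}(T)$, and in particular $A\in\mathcal{S}^{\textsl{ncc}}(T)$.

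With that inclusion in hand, all hypotheses of Theorem~\ref{6.5} are in force: $\mathscr{X}$ is minimal, $X$ is compact Hausdorff (the standing assumption of this section), and $A\in\mathcal{S}^{\textsl{ncc}}(T)$. Applying Theorem~\ref{6.5} with $H=A$ then yields directly that the evaluation map $\mathfrak{e}\colon tE_x[A]\mapsto t\overline{Ax}$ is an isomorphism from $T\curvearrowright T/E_x[A]$ onto $T\curvearrowright X/A$, which is precisely the assertion of the corollary. Thus the substantive content is carried entirely by Theorem~\ref{6.5}, and there is no real obstacle to overcome; the corollary is a specialization, and it sharpens Corollary~\ref{6.6} by promoting the bare cardinality coincidence to a genuine flow isomorphism.

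It is worth recording that in the syndetic case the picture is more transparent, which is what makes the comparison with \cite{GH} natural. By Corollary~\ref{5.5}, $E_x[A]$ is itself a \emph{syndetic} closed subgroup of $T$ containing $A$ with $\overline{Ax}=\overline{E_x[A]x}$, and by Corollary~\ref{5.6} it admits the explicit presentation $E_x[A]=K^{-1}A$ for a compact $K$ with $Kx$ inside any prescribed neighborhood of $x$. Consequently $T\curvearrowright T/E_x[A]$ is an a.p. coset flow by a syndetic stabilizer, so the isomorphism of Theorem~\ref{6.5} identifies the a.p. factor $T\curvearrowright X/A$ of $\mathscr{X}$ with a classical homogeneous equicontinuous flow; no appeal to the co-compact machinery beyond the inclusion $\mathcal{S}^{\textsl{ns}}(T)\subseteq\mathcal{S}^{\textsl{ncc}}(T)$ is strictly needed for the statement.
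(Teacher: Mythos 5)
Your proposal is correct and matches the paper's approach exactly: the paper states Corollary~\ref{6.7} without proof precisely because, as you argue, every normal syndetic closed subgroup is co-compact (the remark preceding Lemma~\ref{1.8}), so $\mathcal{S}^{\textsl{ns}}(T)\subseteq\mathcal{S}^{\textsl{ncc}}(T)$ and the statement is an immediate specialization of Theorem~\ref{6.5} with $H=A$. Your supplementary remarks via Corollaries~\ref{5.5} and \ref{5.6} are accurate but not needed for the conclusion.
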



We now will conclude our arguments with an application of Theorem~\ref{2.3} and \ref{6.5}.

\begin{thm}\label{6.8}
Let $\mathscr{X}$ be a minimal flow, $H\in\mathcal{S}^\textsl{ncc}(T)$. Then $T\curvearrowright X/H$ is an a.p. nontrivial factor of $\mathscr{X}$ iff $T\curvearrowright X\times T/H$ is not a minimal flow.
\end{thm}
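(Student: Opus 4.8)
The plan is to observe that the almost-periodicity of the factor $T\curvearrowright X/H$ comes for free, so that the entire content of the statement is the equivalence between \emph{nontriviality} of $X/H$ and \emph{non-minimality} of $T\curvearrowright X\times T/H$. First I would invoke Theorem~\ref{6.5}: since $\mathscr{X}$ is minimal and $H\in\mathcal{S}^\textsl{ncc}(T)$, the quotient $T\curvearrowright X/H$ is always a well-defined a.p.\ factor of $\mathscr{X}$ (indeed, it is isomorphic to the a.p.\ coset flow $T\curvearrowright T/E_x[H]$). Hence the compound property ``$T\curvearrowright X/H$ is an a.p.\ nontrivial factor'' reduces to the single clause ``$X/H\not=\{pt\}$.''

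Second, I would translate nontriviality of $X/H$ into a property of the subflow $H\curvearrowright X$. By Theorem~\ref{3.1}-(3) the cells $\{\overline{Hx}\,|\,x\in X\}$ form a $T$-invariant partition of $X$ into $H$-minimal sets; consequently $X/H=\{pt\}$ holds exactly when $\overline{Hx}=X$ for every $x$, i.e.\ exactly when $H\curvearrowright X$ is minimal. Contrapositively, $X/H$ is a nontrivial factor iff $H\curvearrowright X$ is not minimal.

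Third, I would bring in Theorem~\ref{2.3}. Since $H$ is co-compact and $\mathscr{X}$ is (by hypothesis) minimal, that theorem asserts that $T\curvearrowright X\times T/H$ is minimal iff $H\curvearrowright X$ is a minimal subflow of $\mathscr{X}$. Negating both sides gives: $T\curvearrowright X\times T/H$ is not minimal iff $H\curvearrowright X$ is not minimal. Chaining this with the previous step yields the desired biconditional, namely $T\curvearrowright X/H$ is an a.p.\ nontrivial factor $\iff$ $H\curvearrowright X$ is not minimal $\iff$ $T\curvearrowright X\times T/H$ is not a minimal flow.

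I do not anticipate any serious obstacle, since both halves rest on already-established results; the only point demanding care is the bookkeeping identification ``$X/H$ trivial $\iff$ $H\curvearrowright X$ minimal,'' which relies on the fact that the $H$-orbit closures genuinely \emph{partition} $X$ (Theorem~\ref{3.1}-(3)), so that a single cell equal to $X$ forces minimality of $H\curvearrowright X$. I would also flag explicitly that the standing hypothesis ``$\mathscr{X}$ minimal'' is precisely what collapses the two-clause biconditional of Theorem~\ref{2.3} to the single condition on the subflow $H\curvearrowright X$, so that no separate verification of the minimality of $\mathscr{X}$ is needed on either side of the equivalence.
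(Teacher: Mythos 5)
Your proof is correct, and its second half is exactly the paper's: from non-minimality of $T\curvearrowright X\times T/H$, Theorem~\ref{2.3} (with the standing hypothesis that $\mathscr{X}$ is minimal) yields non-minimality of $H\curvearrowright X$, hence $X/H\not=\{pt\}$, and Theorem~\ref{6.5} upgrades this to an a.p.\ nontrivial factor. Where you genuinely diverge is the forward direction. The paper never appeals to Theorem~\ref{2.3} there; instead it uses the isomorphism $X/H\cong T/E_x[H]$ of Theorem~\ref{6.5} to view $X/H$ as a \emph{common} nontrivial factor of $\mathscr{X}$ and of the coset flow $T\curvearrowright T/H$ (the point being that $H\le E_x[H]$ makes $T/E_x[H]$ a factor of $T/H$), and then invokes the standard disjointness mechanism: two minimal flows with a common nontrivial factor $Z$, via homomorphisms $\phi$ and $\psi$, have a non-minimal product, since $\{(x,sH)\,|\,\phi(x)=\psi(sH)\}$ is a nonempty proper closed invariant subset of the product. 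You instead convert nontriviality of $X/H$ into non-minimality of $H\curvearrowright X$ via the partition $\{\overline{Hx}\,|\,x\in X\}$ of Theorem~\ref{3.1}-(3), and then use the necessity half of Theorem~\ref{2.3} contrapositively. Your route is more economical and symmetric --- both implications flow through a single chain of equivalences, using only facts already needed for Theorem~\ref{6.5} itself --- whereas the paper's forward argument carries a little extra structural content: it exhibits the explicit closed invariant subset (a joining over the common factor) that witnesses non-minimality, which is the classical disjointness picture.
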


\begin{proof}
	If $T\curvearrowright X/H$ is nontrivial, then by Theorem~\ref{6.5}, $T\curvearrowright X/H$ is a common factor of $\mathscr{X}$ and $T\curvearrowright T/H$ so that $T\curvearrowright X\times T/H$ is not minimal. Now conversely, suppose $T\curvearrowright X\times T/H$ is not minimal. Then $H\curvearrowright X$ is not minimal by Theorem~\ref{2.3}. So $X/H$ is not a singleton. Now by Theorem~\ref{6.5}, $T\curvearrowright X/H$ is an a.p. nontrivial factor of $\mathscr{X}$. The proof is completed.
\end{proof}

\subsection*{Acknowledgements}
The author is grateful to the anonymous referee for her/his constructive comments.
The work was supported by National Natural Science Foundation of China (No.~12271245).

\bibliographystyle{plain}

\end{document}